\documentclass[11pt,letterpaper]{amsart}

\usepackage[margin=3cm]{geometry}
\usepackage{amssymb}
\usepackage{mathtools}

\usepackage{enumitem}
\setlist[enumerate,1]{label={\normalfont(\roman*)}}
\usepackage{mathrsfs}
\usepackage[only,llbracket,rrbracket]{stmaryrd}
\usepackage[svgnames]{xcolor}
\usepackage{verbatim}

\usepackage{tikz}

\usepackage[bookmarks=true]{hyperref}
\hypersetup{colorlinks=true}  

\usepackage{cleveref}

\usepackage{soul}

\definecolor{burntorange}{rgb}{0.8, 0.33, 0.0}

\theoremstyle{plain}
\newtheorem{thm}{Theorem}[section]
\newtheorem{lem}[thm]{Lemma}
\newtheorem{prop}[thm]{Proposition}
\newtheorem{cor}[thm]{Corollary}
\newtheorem{thmABC}{Theorem}

\theoremstyle{definition}

\theoremstyle{remark}

\newcommand{\noshow}[1]{}

\newcommand{\Aut}[1]{\operatorname{Aut}(#1)}
\newcommand{\Core}[2]{\operatorname{Core}_{#1}(#2)}

\newcommand{\Sym}[1]{\operatorname{Sym}(#1)}

\newcommand{\GL}[2][]{\mathrm{GL}_{#1}(#2)}

\newcommand{\cU}{\mathcal{U}}

\renewcommand{\leq}{\leqslant}  
\renewcommand{\geq}{\geqslant}

\newcommand{\emptyword}{\varepsilon}

\newcommand{\Tree}{\mathcal{T}}
\newcommand{\topo}[1]{\tau_{\text{#1}}}

\newcommand{\0}{\mathbf{0}}
\newcommand{\1}{\mathbf{1}}
\newcommand{\ad}{\operatorname{ad}}
\newcommand{\Field}[1]{\mathbb{F}_{#1}}
\newcommand{\half}{\tfrac{1}{2}}
\newcommand{\invlim}{\varprojlim}
\newcommand{\Laurentseries}[2]{#1(\!(#2)\!)}
\newcommand{\localring}[1]{\mathcal{O}_{#1}}
\newcommand{\Nat}{\mathbb{N}}
\newcommand{\padics}[1][p]{\mathbb{Q}_{#1}}
\newcommand{\padicZ}[1][p]{\mathbb{Z}_{#1}}

\newcommand{\set}[2]{\{\,#1\mid#2\,\}}
\newcommand{\Span}[2][]{\operatorname{Span}_{#1}(#2)}

\newcommand{\units}[1]{#1^{\ast}}
\newcommand{\Zint}{\mathbb{Z}}

\newcommand{\nbd}{\nobreakdash-}

\usepackage[style=alphabetic,sorting=nyt,maxbibnames=7,maxcitenames=5,block=space,backend=bibtex,doi=false,isbn=false,url=false]{biblatex}
\renewbibmacro{in:}{%
  \ifentrytype{article}{}{\printtext{\bibstring{in}\intitlepunct}}}

\AtEveryBibitem{
  \clearlist{language}
}

\title[Uncountably many local isomorphism types of simple groups]{
Uncountably many local isomorphism types of compactly generated simple groups}

\date{\today}

\author[]{Ilaria Castellano}
\address{Ilaria Castellano: Dipartimento di Matematica e Applicazioni, Università degli Studi di Milano-Bicocca, Via Roberto Cozzi, 55 - 20126, Milano, Italy}
\email{ilaria.castellano@unimib.it}

\author[]{Jorge Fariña-Asategui}
\address{Jorge Fariña-Asategui: Section de math\'{e}matiques, Universit\'{e} de Gen\`{e}ve, Rue du Conseil-G\'{e}n\'{e}ral 7--9, 1205 Geneva, Switzerland}
\email{Jorge.Farina-Asategui@unige.ch}

\author[]{Mikel Eguzki Garciarena}
\address{Mikel Eguzki Garciarena: Heinrich-Heine-Universität Düsseldorf, Mathematisch-Naturwissenschaftliche Fakultät, Mathematisches Institut, Universitätsstr. 1, 40225 Düsseldorf, Germany.}
\email{mikel.eguzki.garciarena.perez@hhu.de}

\author[]{Bianca Marchionna}
\address{Bianca Marchionna: Université catholique de Louvain (IRMP), Chemin du Cyclotron 2, 1348 Louvain-la-Neuve, Belgium}
\email{bianca.marchionna@uclouvain.be}

\author[]{Martyn Quick}
\address{Martyn Quick: Mathematical Institute, University of St Andrews, North Haugh, St Andrews, Fife, KY16 9SS, U.K.}
\email{mq3@st-andrews.ac.uk}

\author[]{Colin D.~Reid}
\address{Colin D.~Reid: School of Computer, Data and Mathematical Sciences, Western Sydney University, Penrith NSW 2751, Australia}
\email{C.Reid5@westernsydney.edu.au}

\author[]{Simon M.~Smith}
\address{Simon M.~Smith: Charlotte Scott Research Centre for Algebra, University of Lincoln, Lincoln, U.K.}
\email{sismith@lincoln.ac.uk}

\author[]{Stephan Tornier}
\address{Stephan Tornier: The University of Newcastle, School of Computer and Information Sciences, University Drive, 2308 Callaghan NSW, Australia}
\email{stephan.tornier@newcastle.edu.au}

\author[]{Matteo Vannacci}
\address{Matteo Vannacci: Dipartimento di Matematica `Ulisse Dini', Universit\`a degli Studi di Firenze, Viale Morgagni 67/A, 50134 Firenze, Italy}
\email{matteo.vannacci@unifi.it}

\author[]{John S.~Wilson}
\address{John S.~Wilson: Christ's College, Cambridge, CB2 3BU, Great Britain
\and 
Mathematisches Institut, Universit\"at Leipzig, 04109 Leipzig, Deutschland}
\email{jsw13@cam.ac.uk, john.wilson@uni-leipzig.de}

\begin{document}

\begin{abstract}
A major open question in the theory of locally compact groups is the following. Let $\mathscr{S}$ be the class of non-discrete compactly generated totally disconnected locally compact groups that are topologically simple. Is the number of local isomorphism classes of groups in $\mathscr{S}$ uncountable? We have answered this question, showing that there are $2^{\aleph_0}$ local isomorphism classes in $\mathscr{S}$.

Our result was obtained without the use of artificial intelligence; it arose from a problem session that ran over several days at the workshop {\it Branch groups: subgroups, rigidity, topologies} at the Universidad Complutense de Madrid, organised by Dominik Francoeur, Alejandra Garrido and Tatiana Nagnibeda.
\end{abstract}
\maketitle

\section{Introduction}

The theory of locally compact groups describes the symmetries of all locally finite structures. It typically begins with the observation that every locally compact group is an extension of a connected locally compact group by a totally disconnected locally compact group (henceforth, \emph{tdlc} group). The connected locally compact groups are well-understood: by the solution to Hilbert's fifth problem by Gleason, Montgomery–Zippin and
Yamabe, such groups are known to be projective limits of Lie groups, and the application of Lie theoretic techniques to all connected locally compact groups gave rise to deep decomposition and classification theorems. Consequently, the focus of research in locally compact groups now primarily concerns understanding tdlc groups.

In general tdlc groups cannot be approximated by Lie groups, but the lack of Lie theoretic tools is compensated for by a rich structure of compact open subgroups: a classical theorem of D.~van Dantzig~\cite{vDa31} guarantees that every tdlc group admits a family of compact open subgroups (i.e.,~profinite subgroups) that form a basis of identity neighbourhoods. This is in stark contrast to the connected case: connected locally compact groups have no proper open subgroups. This rich \emph{local} subgroup structure of tdlc groups forms a foundation for the majority of the tools used to analyse tdlc groups, including Willis Theory. It also allows techniques from permutation group theory to be applied to tdlc groups via the following correspondence: every tdlc group is isomorphic as a topological group to a closed permutation group in which every point stabilizer has finite orbits, and conversely every such permutation group admits a natural topology under which it is a tdlc group.

The local subgroup structure of tdlc groups confers significant potency to the local isomorphism relation, where two tdlc groups $G_1, G_2$ are \emph{locally isomorphic} if they have topologically isomorphic open subgroups $U_1 \leq G_1$ and $U_2 \leq G_2$. Properties common to all sufficiently small compact open subgroups are called \emph{local}, and a prominent theme in tdlc theory involves understanding how the local and global properties of tdlc groups are related. For example, the structure lattice of a tdlc group $G$ (see \cite{CRW-LocNormal2}) is known to depend only on the local isomorphism class of $G$.

The most mature area of research in tdlc theory concerns compactly generated tdlc groups. Every locally compact group is the directed union of its compactly generated subgroups, so questions about local properties can be reduced to the compactly generated case. In 2011, Pierre-Emmanuel Caprace and Nicolas Monod~\cite{CapraceMonod} showed that the structure of non-discrete compactly generated tdlc groups depends on the class $\mathscr{S}$ of non-discrete, compactly generated, topologically simple, tdlc groups. In \cite{ReidWesolek:Essentially}, Phillip Wesolek and Colin Reid go further and prove that every compactly generated locally compact group admits a chief series of finite length up to compact groups and discrete groups; such a series is called an \emph{essentially chief series}. A Jordan--H\"{o}lder theorem holds for these essentially chief series. These results place the class $\mathscr{S}$ at the heart of tdlc theory, a position emphasised by Pierre-Emmanuel Caprace at the 7th European Congress of Mathematics~\cite{CapraceSimple}.

In 2017, Simon Smith~\cite{smith17} proved that there are $2^{\aleph_0}$ many isomorphism classes in $\mathscr{S}$, using a construction he called the \emph{box product}. The construction takes as inputs two permutation groups, and produces another permutation group that inherits many of the permutational and topological properties of the input groups, but it does not inherit discreteness. Results in \cite{smith17} suggest that the problem of classifying groups in $\mathscr{S}$ up to isomorphism is intractable. In light of this, the following became one of the main open questions in tdlc theory:

\begin{quote}
{\it Is the number of local isomorphism classes of groups in $\mathscr{S}$ uncountable?}
\end{quote}

The question appears as \S20.5.1 in the lecture notes \cite{NewDirectionsBook} which were produced following the 2014 Oberwolfach Arbeitsgemeinschaft {\it Totally disconnected groups}; it also appears in the Kourovka Notebook \cite{KourovkaNotebook} as Problem 19.22 where it is attributed to Pierre-Emmanuel Caprace; and it is referred to in George Willis and Pierre-Emmanuel Caprace's survey article \cite[pp.~1563]{CapraceWillis:ICM} that accompanies their 2022 ICM talk. We affirmatively answer this question, proving the following.

\begin{thmABC} 
\label{thm:uncount_simple}
There exist $2^{\aleph_0}$ many non-discrete compactly generated  simple tdlc groups which are not pairwise locally isomorphic.   \end{thmABC}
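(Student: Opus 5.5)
We aim to produce, for each element of a family of size $2^{\aleph_0}$, a group in $\mathscr{S}$ whose compact open subgroups encode the index of that element as a local invariant. Then a single group can be locally isomorphic to at most countably many members of the family, and the theorem follows. The macros already set up in the paper ($\padicZ$, $\Laurentseries{}{}$, $\GL{}{}$, branch-group notation, $\Tree$) point to a construction via groups acting on rooted or regular trees. In that setting the compact open subgroups are profinite groups acting on rooted trees, and we can control them explicitly.

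The first step is to build, for each infinite set $S$ of primes (or each sequence $\sigma\in\{0,1\}^{\Nat}$), a profinite group $U_\sigma$ that acts faithfully on a rooted tree and is \emph{just infinite}, or at least has a robust local invariant. Concretely, I would take an iterated wreath-type or branch-type construction whose level-$n$ section uses a finite perfect group $F_n$ depending on $\sigma(n)$, for example $\mathrm{PSL}_2$ over different fields, or $\mathrm{Alt}(d_n)$ with $d_n$ varying. The invariant is then the set of finite simple groups occurring as composition factors of every open subgroup of $U_\sigma$. This set is commensurability-invariant: it is unchanged on passing to open subgroups, since an open subgroup contains a deep level stabiliser, which still involves all the $F_m$ for large $m$. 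We choose $\sigma$ so that the sets $\set{F_m}{m\geq n}$ genuinely differ as $\sigma$ ranges over an uncountable family. The right tool is an almost-disjoint family of subsets of $\Nat$, where any two members have finite intersection. With this choice, two distinct parameters yield groups with no topologically isomorphic open subgroups.

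The second step embeds each $U_\sigma$ as a compact open subgroup of a group $G_\sigma\in\mathscr{S}$. My plan is to use Smith's box product $U\boxtimes V$, whose compact open subgroups are built from the point stabilisers of the input permutation groups. Alternatively, I would use a Neretin/Röver-type group of almost automorphisms of the tree, $G_\sigma=\langle U_\sigma, \text{finitely many spheromorphisms}\rangle$, which is compactly generated and has $U_\sigma$ as a compact open subgroup. I would invoke \cite{smith17} for simplicity and compact generation of box products with transitive, generated-by-point-stabiliser inputs. For this to apply, I need input groups whose point stabilisers are locally $U_\sigma$. This can be arranged by letting the tree's vertex stabilisers in an Aut($\Tree$)-type universal group with prescribed local action be $U_\sigma$. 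Non-discreteness is automatic because $U_\sigma$ is infinite and open.

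The main obstacle is verifying topological simplicity while keeping the local structure \emph{exactly} $U_\sigma$. For this I would use Tits' independence property (property P) together with the criterion that the subgroup $G^{+}$ generated by edge stabilisers is simple, and then check that $G^{+}$ is open, so that its compact open subgroups remain commensurable with $U_\sigma$. The second delicate point is the rigidity claim in the first step: that composition factors occurring cofinally form a local isomorphism invariant. This follows because any isomorphism of open subgroups carries a neighbourhood basis of open normal subgroups to another such basis, so the sets of simple factors of open subgroups coincide. Once both steps are in place, the almost-disjoint family gives $2^{\aleph_0}$ pairwise non-locally-isomorphic groups in $\mathscr{S}$, which proves Theorem~\ref{thm:uncount_simple}.
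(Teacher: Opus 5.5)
\textbf{There is a genuine gap, and it defeats the whole strategy.} Your Step~1 invariant takes only countably many values on $\mathscr{S}$, so no embedding in Step~2 can succeed.

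To see this, let $G\in\mathscr{S}$ and let $U$ be compact open in $G$.
\begin{enumerate}
\item Because $G$ is compactly generated, it acts vertex-transitively on a connected Cayley--Abels graph $\Gamma$ of finite valency $d$, with $U=G_v$ for some vertex $v$.
\item The kernel of this action is the core of $U$, a compact normal subgroup. It is trivial, since $G$ is topologically simple and non-compact (a compact topologically simple tdlc group is finite, hence discrete).
\item By faithfulness, $U=\invlim U/G_{(B(v,n))}$. Here $U/G_{(B(v,1))}\leq\Sym{d}$, and for $n\geq 1$ the quotient $G_{(B(v,n))}/G_{(B(v,n+1))}$ embeds in $\prod_{u\in S(v,n)}\Sym{\Gamma(u)}\cong\Sym{d}^{|S(v,n)|}$.
\item Hence every composition factor of $U$, and of every closed subgroup of $U$, is a section of $\Sym{d}$. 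In particular only finitely many finite simple groups occur.
\end{enumerate}
There are only countably many finite sets of finite simple groups, so your invariant separates at most countably many local isomorphism classes in $\mathscr{S}$. An uncountable almost-disjoint family forces $U_\sigma$ to have infinitely many distinct simple groups occurring cofinally. Such a $U_\sigma$ is not even commensurable with a compact open subgroup of a group in $\mathscr{S}$.

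Step~2 also hides the real difficulty. With finite inputs, the vertex stabilizers of $\cU(M,N)$ are periodic iterated wreath products of $M$, $M_x$, $N_y$ (Proposition~\ref{prop:point_stabs_box_prod}), so only countably many local types arise. The invariant must therefore live in a point stabilizer $N_y$ of an infinite, compactly generated, transitive group $N$ that is generated by point stabilizers, as Theorem~\ref{thm:simon_duke} requires. Such an $N$ is subject to the same bound above.

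The paper instead uses a continuous parameter rather than a set of composition factors:
\begin{enumerate}
\item It takes Snopce's uniform pro-$p$ groups $G_3(\alpha)$, $\alpha$ in the uncountable set $\mathcal{A}$. These are pairwise non-commensurable via their $\padics$-Lie algebras, while all their composition factors are $C_p$.
\item It realises $G_3(\alpha)$ as a compact open subgroup of the compactly generated group $\padics^{2}\rtimes(H\times\langle a\rangle)$, where $a$ acts by the contracting scalar $p$ (Proposition~\ref{prop:ExpansiveStep1}).
\item It passes to a group generated by point stabilizers via Lemma~\ref{lem:wreath_point_stabilizers}.
\item It needs Lemma~\ref{lem:Main} (Sylow pro-$p$ subgroups plus rigidity of quotients of $G_3(\alpha)^{p^r}$) to show that $\alpha$ can still be recovered from the iterated wreath product $W(\alpha)$.
\end{enumerate}
Finally, property~(P) and $G^{+}$ are not needed: simplicity of $\cU(M,N)$ is already part of Theorem~\ref{thm:simon_duke}.
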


In proving Theorem~\ref{thm:uncount_simple}, we shed some light on the possible pro-$p$ groups and linear groups over local fields that can appear as closed subgroups of groups in $\mathscr{S}$.

\section*{Acknowledgements}
This work arose from a problem session that took place over several days at the workshop {\it Branch groups: subgroups, rigidity, topologies} at the Universidad Complutense de Madrid. The workshop was organised by Dominik Francoeur, Alejandra Garrido and Tatiana Nagnibeda and it was supported by the Spanish Ministry of Science (grant code PID2024-155800NB-C31) and by the European Research Council (grant code SATURN, 101076148). The authors would like to express their gratitude and sincere thanks to the organisers for creating an environment where this work could flourish.

\section*{AI Statement}

No artificial intelligence was used to prove our result.

\section{Preliminaries}\label{sec:prelim}

\subsection{Topological groups}\label{sec:topological_groups}

There will be two sources for topological groups in this paper: profinite groups and permutation groups under the \emph{permutation topology}. For an introduction to the former see \cite{wilson_profinite} and for the latter see \cite{Mol02}.

A permutation group (which for us will always act from the right) is a group $G$ together with a \emph{faithful} action on some set $\Omega$; that is, an action in which the pointwise stabilizer of $\Omega$ in $G$ is trivial. The group consisting of all permutations of $\Omega$ is $\Sym{\Omega}$ and we consider $G$ to be a subgroup of $\Sym{\Omega}$.
For any subset $\Delta \subseteq \Omega$ we denote the setwise stabilizer and pointwise stabilizer of $\Delta$ in $G$ by $G_{\{\Delta\}}$ and $G_{(\Delta)}$ respectively.

If $\Omega$ is any non-empty set, then $\mathrm{Sym}(\Omega)$ can be given a natural topology called the \emph{permutation topology}. This is the topology of pointwise convergence, under which $\Sym{\Omega}$ is a Hausdorff topological group. A neighbourhood basis of the identity is given by pointwise stabilizers of finite subsets of $\Omega$. Under this topology, if $G \leq \Sym{\Omega}$ then a subgroup of $G$ is open in $G$ if and only if it contains the pointwise stabilizer in $G$ of some finite subset of~$\Omega$.
For any finite subset $\Delta\subset \Omega$, the group $\mathrm{Sym}(\Omega)_{(\Delta)}$ is both open and closed in $\mathrm{Sym}(\Omega)$, so with this topology $\mathrm{Sym}(\Omega)$ is totally disconnected.

Recall that $G \leq \Sym{\Omega}$ is closed if and only if some point stabilizer is closed, and this holds if and only if all point stabilizers in $G$ are closed. Our group $G$ is compact if and only if $G$ is closed and all $G$-orbits on $\Omega$ are finite. If $G$ is closed then it is locally compact if and only if $G_{(\Delta)}$ is compact for some finite $\Delta \subseteq \Omega$.

If $G$ is now any group (not necessarily a permutation group) and $H \leq G$ then there is a natural transitive action of $G$ on the right coset space $\Omega := H \backslash G$ via right multiplication. The subgroup $H$ is a point stabilizer in this action and the pointwise stabilizer of $\Omega$ in $G$ (i.e.,~the kernel of the action) is the \emph{core} of $H$ in $G$, defined as $\Core{G}{H} \coloneq \bigcap_{g \in G} g^{-1}Hg$. Thus, if $H$ is \emph{core-free} in $G$ (by which we mean $\Core{G}{H}$ is trivial) then $G$ acts faithfully on $H \backslash G$ and we identify $G$ with its image in $\Sym{H \backslash G}$, viewing $G$ as a permutation group.

If $G$ is a tdlc topological group and $H$ is a compact and open subgroup then the above coset action of $G$ on $H \backslash G$ has a kernel that is closed and compact in $G$. The subgroup $\hat{G}$ of $\Sym{H \backslash G}$ induced by this action is called the \emph{Schlichting completion} (see \cite{ReidWesolekCompletions}) of the pair $(G,H)$ and is closed and tdlc under the permutation topology on $\Sym{H \backslash G}$. Moreover, by the Orbit-Stabilizer Theorem, all orbits of stabilizers in $\hat{G}$ have cardinality $|g^{-1}Hg : (g^{-1}Hg) \cap (k^{-1}Hk)|$ for some $g,k \in G$, and this is always finite because $(g^{-1}Hg) \cap (k^{-1}Hk)$ is open and $g^{-1}Hg$ is compact. All stabilizers in $\hat{G}$ have only finite orbits and are closed because $\hat{G}$ is closed; thus they are compact in the permutation topology.
A particular instance of this phenomenon that is important for our article is when $H$ is core-free. In this situation the action is faithful and the identification of  $G$ with $\hat{G} \leq \Sym{H \backslash G}$ makes sense because the two groups are topologically isomorphic; we can thus consider $G$ to be a permutation group that is closed, tdlc, with compact open stabilizers (under the permutation topology). As a permutation group $G$ is \emph{subdegree-finite}; that is, all orbits of stabilizers in $G$ are finite.

Recall that a profinite group is a Hausdorff, compact and totally disconnected group and two profinite groups are  \emph{commensurable} if they have isomorphic open subgroups. If $G$ is tdlc and $H$ is compact open and core-free then (viewing $G$ as a permutation group) all point stabilizers are profinite in the permutation topology because $H$ is profinite, and they are pairwise commensurable. The pairwise commensurability can be seen directly from their behaviour as permutation groups: subdegree-finiteness implies that the intersection of two stabilizers has finite index in each stabilizer, with the index equal to their respective orbit lengths via the Orbit-Stabilizer Theorem.

\subsection{Wreath products}
\label{section:WreathProds}

Let $G \leq \Sym{X}$~and~$H \leq \Sym{Y}$ be permutation groups on sets $X$~and~$Y$.  We regard them as topological groups with respect to the permutation topology.  Recall that the \emph{wreath product} $J = G \wr_{Y} H$ is constructed as a semidirect product $B \rtimes H$ where the \emph{base group} $B = G^{Y}$ is a direct product of copies of~$G$ indexed by the set~$Y$ and $H$~acts on~$B$ by permuting the factors in the same way that $H$~permutes the points of~$Y$; that is, if $b = (g_{y})$ is an element of~$B$ (written as a sequence indexed by the set~$Y$) and $h \in H$ then
\[
h^{-1}bh = (g_{y^{h^{-1}}})_{y \in Y}.
\]
We shall write elements of~$J$ as products~$bh$ where $b \in B$ and $h \in H$.  There is a faithful action of~$J$ on the Cartesian product $X \times Y$ given by
\begin{equation}
(x,y)^{bh} = (x^{g_{y}},y^{h}),
\label{eq:WreathAction}
\end{equation}
where $b = (g_{y})_{y \in Y}$ with $g_{y} \in G$ for each~$y$ and $h \in H$.  Thus $J$~can be viewed as a subgroup of~$\Sym{X \times Y}$.  Observe then that the stabilizer of the point~$(x,y)$ is given by
\[
J_{(x,y)} = \biggl( G_{x} \times \prod_{z \neq y} G \biggr) H_{y}
\]
(where in this notation $G_{x}$~occurs in the $y$th coordinate). Thus
 every point stabilizer is open in~$J$ if we equip it with the product topology induced from the topologies on~$G$ and on~$H$.

Conversely, observe that the collection~$\mathscr{U}$ of all sets of the form~$BH_{y}$, for $y \in Y$, together with those of the form $\left( G_{x} \times \prod_{z \neq y} G \right)H$, for $x \in X$, has the property that $\set{ Ug }{ U \in \mathscr{U}, \, g \in J }$ is a subbasis for the product topology on~$J$.  Each~$U$ in~$\mathscr{U}$ contains some point stabilizer~$J_{(x,y)}$ and hence every subset of~$J$ that is open with respect to the product topology is also open with respect to the permutation topology induced from $\Sym{X \times Y}$.  We conclude that the product topology on the wreath product $J = G \wr_{Y} H$ coincides with the permutation topology induced upon it.

Now consider a sequence~$(G_{i})_{i \in \Nat}$ of permutation groups each acting, respectively, on some set~$X_{i}$.  Again we consider them as topological groups with respect to the permutation topology.  Define $J_{1} = G_{1}$ viewed as a permutation group on $Y_{1} = X_{1}$.  Suppose, as an inductive hypothesis, that for some $i > 1$ we have constructed a wreath product~$J_{i-1}$ acting as a permutation group on $Y_{i-1} = X_{i-1} \times X_{i-2} \times \dots \times X_{1}$.  Define $J_{i} = G_{i} \wr_{Y_{i-1}} J_{i-1}$ viewed, as above, as a permutation group on $Y_{i} = X_{i} \times Y_{i-1}$.  There is a natural projection map $\pi_{i} \colon J_{i} \to J_{i-1}$ determined by the semidirect product construction and hence we construct an inverse limit of wreath products
\[
\dots \to J_{3} \to J_{2} \to J_{1} \to \1.
\]
Define $W = \invlim J_{i}$ to be the inverse limit of this system.  We shall call~$W$ an \emph{iterated wreath product} and denote it by
\[
W = \dots \wr_{Y_{3}} G_{3} \wr_{Y_{2}} G_{2} \wr_{Y_{1}} G_{1}.
\]
As is common, we identify~$W$ here with the subgroup
\begin{equation}
W = \biggl\{ \, (g_{i}) \in \prod_{i=1}^{\infty} J_{i} \,\biggm|\, \text{$g_{i}\pi_{i} = g_{i-1}$ for $i \geq 2$} \, \biggr\}
\label{eq:invlimit}
\end{equation}
of the Cartesian product of the topological groups~$J_{i}$.  Note here that the $i$th coordinate of $(g_i) \in W$ is $g_i \in J_i$. This identification then endows the inverse limit with the subspace topology induced by the product topology on $\prod_{i=1}^{\infty} J_{i}$.  (If each~$J_{i}$ were finite then this, of course, endows~$W$ with the structure of a profinite group.  More generally, if each~$J_{i}$ is a profinite group then the iterated wreath product is itself profinite.)

We may construct a rooted tree~$\Tree$ whose vertices are the points in the sets~$Y_{i}$ together with a root~$\emptyword$.  The root~$\emptyword$ is joined to every vertex in~$Y_{1}$, while if $i \geq 2$ and $z = (x,y) \in Y_{i}$ where $x \in X_{i}$ and $y \in Y_{i-1}$ then there is an edge from~$y$ to~$z$.  There is a natural action of the iterated wreath product~$W$ on the rooted tree~$\Tree$ where an element $g = (g_{i}) \in W$ (as in Equation~\eqref{eq:invlimit}) moves a vertex~$y \in Y_{j}$ in the $j$th level of the tree by applying its $j$th coordinate~$g_{j}$.  The compatibility condition on the coordinates of~$g$ together with the definition~\eqref{eq:WreathAction} of the action of the wreath product on pairs ensures that this is indeed an action of~$W$ on~$\Tree$ by automorphisms.  Furthermore, since each~$J_{i}$ acts faithfully on the set~$Y_i$, it follows that the iterated wreath product~$W$ acts faithfully on the set~$V\Tree$ of vertices of~$\Tree$ and we may therefore (initially as an abstract group) view~$W$ as a subgroup of the automorphism group of $\Tree$; that is, $W \leq \Aut{\Tree}$. There are now two topologies on $W$: the aforementioned induced topology~$\topo{ind}$ and the permutation topology~$\topo{perm}$ arising from $\Aut{\Tree}$.

If $y \in Y_{j}$, then the stabilizer~$W_{y}$ in~$W$ of the vertex~$y$ consists of those elements~$(g_{i})$ of~$W$ such that $g_{j}$~lies in the stabilizer of~$y$ in~$J_{j}$.  Thus
\begin{equation}
W_{y} = W \cap ( J_{1} \times J_{2} \times \dots \times J_{j-1} \times (J_{j})_{y} \times J_{j+1} \times \cdots )
\label{eq:WreathStabilizer}
\end{equation}
is $\topo{ind}$\nbd open in~$W$.  Conversely, since each~$J_{i}$ is equipped with the permutation topology, every $\topo{ind}$\nbd open neighbourhood of the identity in~$W$ contains an intersection of finitely many subgroups of the form appearing on the right-hand side of Equation~\eqref{eq:WreathStabilizer}.  Therefore every $\topo{ind}$\nbd open subset of~$W$ is also $\topo{perm}$\nbd open.  In conclusion, the natural topology~$\topo{ind}$ on the iterated wreath product~$W$ coincides with the permutation topology~$\topo{perm}$ via this faithful action on~$\Tree$, and we may thus consider $W \leq \Aut{\Tree}$ as a topological group.

\subsection{Groups acting on bi-regular trees and their stabilizers}\label{sec:groups_acting_on_trees}

Here we briefly recall some relevant aspects of the box product construction and universal locally-$(M, N)$ groups first introduced in \cite{smith17}. We warn the reader that our colour maps and permutations are applied from the right, but in \cite{smith17} they are applied from the left.

Let $X$ and $Y$ be sets of cardinality at least $2$, and let $M\leq\Sym{X}$ and $N\leq\Sym{Y}$ be permutation groups, with $M$ or $N$ non-trivial.
Let $\Tree$ be the $(|X|,|Y|)$-biregular tree. Of course this means that the tree $\Tree$ has no root. Edges in $\Tree$ are undirected and consist of two arcs, one in each direction. The edge set of $\Tree$ is $E\Tree$, the vertex set is $V\Tree$ and the arc set is $A\Tree$. For $v \in V\Tree$ the set of arcs originating from $v$ is $A(v)$ and the set of arcs terminating at $v$ is $\overline{A}(v)$. We bipartition the vertex set $V\Tree$ as $\Omega_X\sqcup \Omega_Y$, where the vertices in $\Omega_X$ and in $\Omega_Y$ have valency $|X|$ and $|Y|$ respectively. A subgroup $G \leq \Aut{\Tree}$ is said to be \emph{locally-$(M, N)$} if it preserves setwise the two parts $\Omega_X$ and $\Omega_Y$ of the natural bipartition of $\Tree$ and moreover for all $v \in \Omega_X$ (resp.~$v \in \Omega_Y$) the stabilizer $G_v$ induces $M$ (resp.~$N$) on the distance one neighbours of $v$ in $\Tree$.

A \emph{legal colouring} is a map $\mathcal{L}\colon AT\rightarrow X\sqcup Y$ such that for all $v \in \Omega_X$ (resp.~$v \in \Omega_Y$) the restriction of $\mathcal{L}$ to $A(v)$, which we denote here by $\lambda_{v}^{\mathcal L}: A(v) \rightarrow X$ (resp.~$\lambda_{v}^{\mathcal L}: A(v) \rightarrow Y$), is a bijection, and the restriction of $\mathcal{L}$ to $\overline{A}(v)$ is constant.

For any automorphism preserving the parts of the bipartition $g \in \Aut{\Tree}_{\{\Omega_X\}}$ and any vertex $v \in V\Tree$ we define $
\sigma_{\mathcal L}(g,v) := (\lambda_{v}^{\mathcal L})^{-1} \, g \, \lambda_{v^g}^{\mathcal L}$. Our maps are applied from the right so for example if $v \in \Omega_X$ then $\sigma_{\mathcal L}(g,v): X \rightarrow X$ is a bijection. Thus for $v \in \Omega_X$ (resp.~$v \in \Omega_Y$) we have $\sigma_{\mathcal L}(g,v) \in \Sym{X}$ (resp.~$\sigma_{\mathcal L}(g,v) \in \Sym{Y}$). Thus we may define the following universal group,
\[
\cU_{\mathcal L}(M,N):= \left\{ g\in\Aut{\Tree}_{\{\Omega_X\}}\ {\big|}\ 
 \sigma_{\mathcal L}(g,v)\in M \ \forall v \in \Omega_X \ \text{ and } \ 
 \sigma_{\mathcal L}(g,v)\in N \ \forall v \in \Omega_Y
\right \}.
\]
By \cite[Proposition~11]{smith17}, the groups $\cU_{\mathcal L}(M,N)$ arising from different legal colourings are conjugate in $\Aut{\Tree}$. In the rest of the article we consider a fixed legal colouring $\mathcal L$ and we will write $\mathcal U(M,N)$ for $\cU_{\mathcal L}(M,N)$. The \emph{box product} $M\boxtimes N$ is the permutation group induced by $\cU(M,N)$ on $\Omega_Y$ (see \cite[Section~3]{smith17}) and the groups $\mathcal U(M,N)$ and $M\boxtimes N$ are isomorphic as topological groups.

As a topological group under the permutation topology, $\mathcal U(M,N)$ enjoys the following properties. The name \emph{universal locally-$(M, N)$ group} arises from item~\ref{item:thm:simon_duke:universal}.

\begin{thm}[{\cite[Theorem~1]{smith17}}]\label{thm:simon_duke} Suppose that $M\leq  \Sym{X}$ and $N \leq \Sym{Y}$ are permutation groups such that $\lvert X\rvert, \lvert Y\rvert >1$, with $M$ or $N$ nontrivial. Then the following hold:
\begin{enumerate}
    \item $\mathcal{U}(M, N) \leq \Aut{\Tree}$ is locally-$(M,N )$.
    \item \label{item:thm:simon_duke:universal} If $M$ and $N$ are transitive and $H \leq \Aut{\Tree}$ is locally-$(M,N )$, then $H$ is conjugate in $\Aut{\Tree}$ to some subgroup of $\mathcal{U}(M, N)$.
    \item If $M$ and $N$ are closed, then $\mathcal{U}(M,N)$ is closed.
    \item If M and N are generated by point stabilizers, then $\mathcal{U}(M, N)$ is simple if and only if $M$ or $N$ is transitive.
    \item If $M$ and $N$ are closed, then $\mathcal{U}(M, N)$ is locally compact if and only if all point stabilizers in $M$ and $N$ are compact.
    \item If $M$ and $N$ are closed and compactly generated with compact point stabilizers and only finitely many orbits, and $M$ or $N$ is transitive, then $\mathcal{U}(M, N)$ is compactly generated.
    \item $\mathcal{U}(M, N)$ is discrete if and only if $M$ and $N$ are semiregular.
\end{enumerate}
\end{thm}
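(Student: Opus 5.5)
The statement is \cite[Theorem~1]{smith17}, and my plan is to prove each item directly from the definition of $\cU(M,N)$ through the legal colouring $\mathcal L$. The guiding principle is that membership in $\cU(M,N)$ is a purely local condition on $g$: the local actions $\sigma_{\mathcal L}(g,v)$ must lie in $M$ or $N$. The key tool is an \emph{extension lemma}: any local permutation prescribed at a vertex $v$ (in $M$ or $N$) can be extended outward, ball by ball, to an element of $\cU(M,N)$, with complete freedom to choose the local action at each new vertex from the appropriate coset. This works because the colouring identifies every star with $X$ or $Y$, and the condition that $\mathcal L$ is constant on $\overline{A}(v)$ makes the choices at distinct vertices independent.

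For item (i), the extension lemma shows directly that $G_v$ induces all of $M$ (resp.\ $N$) on the neighbours of $v$. For item (ii), given a locally-$(M,N)$ group $H$, I would construct a new legal colouring $\mathcal L'$ with $H\leq \cU_{\mathcal L'}(M,N)$. To do this, colour the star at a base vertex arbitrarily. Then propagate outward along $H$-orbits, using transitivity of $M$ and $N$ and the fact that each stabilizer $H_v$ induces $M$ or $N$, so that every $\sigma_{\mathcal L'}(h,v)$ lands in $M$ or $N$. Finally, \cite[Proposition~11]{smith17} conjugates $\cU_{\mathcal L'}(M,N)$ to $\cU(M,N)$.

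For item (iii), each condition $\sigma_{\mathcal L}(g,v)\in M$ depends only on $g$ restricted to the star of $v$, so it defines a closed subset of $\Aut{\Tree}$ whenever $M$ is closed; intersecting these closed sets gives closedness. For item (v), the stabilizer of a vertex (or edge) is an inverse limit of the groups induced on balls. By the extension lemma, these groups are iterated wreath products of point stabilizers $M_x$, $N_y$ (and one copy of $M$ or $N$ at the centre), as in \S\ref{section:WreathProds}. Such a stabilizer is compact precisely when every factor is compact; conversely, compactness of a stabilizer forces its image $M_x$ or $N_y$ to be compact. For item (vii), if $M$ and $N$ are semiregular, then an element fixing an arc has trivial local action at both endpoints, and by induction it is trivial; so arc stabilizers are trivial and the group is discrete. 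Conversely, a nontrivial $M_x$ or $N_y$ allows the extension lemma to produce nontrivial elements fixing arbitrarily large balls.

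For item (vi), $\cU(M,N)$ acts on $\Tree$ with finitely many orbits on vertices and edges, since $M$ or $N$ is transitive and both have finitely many orbits. Therefore $\cU(M,N)$ is generated by a compact vertex stabilizer, lifts of compact generating sets of $M$ and $N$, and finitely many elements moving between orbit representatives.

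The main obstacle is item (iv). My plan is to use Tits' independence property~(P), which $\cU(M,N)$ visibly satisfies because the local conditions are independent on the two half-trees determined by any edge. Tits' theorem then says that the subgroup $\cU^{+}$ generated by arc stabilizers is simple or trivial, provided the group is not contained in the stabilizer of an end or proper subtree; transitivity of $M$ or $N$ excludes these possibilities. The remaining steps are as follows.
\begin{itemize}
\item Show $\cU^{+}=\cU(M,N)$ when $M$ and $N$ are generated by point stabilizers. Here I would factor an arbitrary element using local actions at a vertex, each written as a product of point stabilizers, each of which lifts to an arc stabilizer.
\item Show that $\cU^+$ is nontrivial.
\item For the converse, if neither $M$ nor $N$ is transitive, build a nontrivial proper normal subgroup from the orbit partitions. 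For instance, colour-preserving conditions give a continuous homomorphism to a nontrivial group recording how orbits of $M$ and $N$ are permuted along paths.
\end{itemize}
Checking the generation step $\cU^+=\cU(M,N)$ carefully is where I expect the most work.
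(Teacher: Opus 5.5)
The paper does not prove this theorem; it quotes it from \cite{smith17}. Your plans for (i), (iii) and (vii) work, and using Tits' property~(P) for (iv) is reasonable.

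\textbf{The gap is in (iv), in the step $\cU^{+}=\cU(M,N)$.} Factoring $\sigma_{\mathcal L}(g,v)$ into elements of point stabilizers and lifting each factor to an arc stabilizer only works when $g$ fixes $v$. It shows $\cU(M,N)_u\leq\cU^{+}$ for every vertex $u$, but says nothing about elements that fix no vertex. Your argument never uses transitivity, and without transitivity the claim is false.

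For example, $M=N=\Sym{\{1,2\}}\times\Sym{\{3,4\}}$ on $\{1,2,3,4\}$ is generated by point stabilizers. An element $g$ sends an arc of colour $x$ leaving $v$ to an arc of colour $x^{\sigma_{\mathcal L}(g,v)}$, so $\cU(M,N)$ preserves the $M$-orbit or $N$-orbit of every arc colour. Hence $\cU(M,N)\backslash\Tree$ is a $4$-cycle. By Bass--Serre theory $\cU(M,N)$ then maps onto $\Zint$ with every vertex stabilizer, and hence $\cU^{+}$, in the kernel.

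The missing step is as follows. If, say, $M$ is transitive, then by (i) each $\cU(M,N)_v$ with $v\in\Omega_X$ is transitive on the neighbours of $v$. So $\langle \cU(M,N)_v : v\in\Omega_X\rangle$ is transitive on $\Omega_Y$, and therefore $\cU(M,N)$ is generated by vertex stabilizers. Only after this does your local factorization give $\cU(M,N)=\cU^{+}$.

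The same bookkeeping gives the converse. In general $\cU(M,N)\backslash\Tree$ is the complete bipartite graph whose parts are the $M$-orbits on $X$ and the $N$-orbits on $Y$. It has a cycle when neither group is transitive, so $\cU(M,N)$ surjects onto a non-trivial free group. Your homomorphism ``recording how orbits are permuted along paths'' is not the mechanism, since these orbit types are preserved rather than permuted.

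\textbf{Smaller slips in (v), (vi) and (ii).}

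In (v) and (vi) you must work with an arc stabilizer, not a vertex stabilizer. $\cU(M,N)_v$ surjects onto $M$ or $N$, which need not be compact; in this paper's application $N$ acts on an infinite set. So ``generated by a compact vertex stabilizer'' should read ``a compact arc stabilizer''. To lift generating sets, write $M=\langle M_x\cup F\rangle$ with $F$ finite; then $M_x$ lifts into the arc stabilizer and $F$ lifts elementwise.

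In (ii) the base star cannot be coloured arbitrarily. First choose, for each $u$, a bijection $\lambda_u$ that conjugates the group induced by $H_u$ on $A(u)$ onto $M$ or $N$, transporting the choice along $H$-orbits. Then, working outward from a root, modify each $\lambda_u$ by an element of $M$ or $N$ to make the colouring legal. These modifications keep every $\sigma_{\mathcal L'}(h,u)$ in $M$ or $N$.
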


Fix a vertex $v \in V\Tree$. We now describe the vertex stabilizer $\mathcal{U}(M, N)_v$ when $M$ and $N$ are transitive and closed. Our description is in terms of a vertex labelling and an arc labelling of $\Tree$ that depends on our choice of the vertex $v$.

Suppose $v \in \Omega_X$ and fix $x \in X$ and $y \in Y$. Notice that $\mathcal{U}(M, N)_v$ acts as automorphisms of the rooted tree $\Tree_v$ obtained from $\Tree$ by declaring $v$ to be the root. We will give $\Tree_v$ an arc colouring $\mathcal L'$ obtained from $\mathcal L$ in the following way. Let $b_v \in M$ be the identity. For each vertex $w \in V\Tree \setminus \{v\}$ let $a_w$ be the unique arc in $A(w)$ pointing towards the root $v$. Since $M$ and $N$ are transitive, if $w \in \Omega_X$ (resp.~$w \in \Omega_Y$) we can choose $b_w \in M$ (resp.~$b_w \in N$) such that the arc colouring $\mathcal L': A\Tree_v \rightarrow X \sqcup Y$ given by $\lambda_w^{\mathcal L} \, b_w$ on each $A(w)$ maps each $a_w$ to $x$ (if $w \in \Omega_X$) or $y$ (if $w \in \Omega_Y)$. 

While $\mathcal L'$ is not a legal arc colouring of $\Tree$, it is a natural arc colouring for a rooted tree, in which  all arcs pointing towards the root are coloured either $x$ or $y$ (depending on the level) and all arcs from any given vertex $w$ to its children are coloured bijectively with $X^o := X \setminus \{x\}$ or $Y^o := Y \setminus \{y\}$ (depending on the level of $w$). We use this to give an alternative description of $\mathcal{U}(M, N)_v$ that can be easily reconciled with an infinite iterated wreath product structure.

For $w \in V\Tree$ and $g \in \Aut{\Tree}_v$ recall $\sigma_{\mathcal L'}(g,w) = (\lambda_w^{\mathcal L'})^{-1} \, g \, \lambda_{w^g}^{\mathcal L'}$ where $\lambda_w^{\mathcal L'}$ is the restriction of $\mathcal L'$ to $A(w)$. A routine calculation and simple observations give the following:
\begin{enumerate}
\item
    $\sigma_{\mathcal L'}(g,w) = b_w^{-1} \sigma_{\mathcal L}(g,w) b_{w^g}$;
\item
    $a_{w^g} = a_w^g$; and
\item
    $a_w^{\mathcal L'}$ is $x$ if $w \in \Omega_X$ and $y$ if $w \in \Omega_Y$.
\end{enumerate}
We claim that the stabilizer $\mathcal{U}(M, N)_v$ is equal to
\[Q:= \set{g \in \Aut{T}_v}{\sigma_{\mathcal L'}(g,w)\in M_x \ \forall w \in \Omega_X \ \text{ and } \ 
 \sigma_{\mathcal L'}(g,w)\in N_y \ \forall w \in \Omega_Y}.\]
Indeed, if $g \in Q$ and $w \in \Omega_X$ then 
$b_w^{-1} \sigma_{\mathcal L}(g,w) b_{w^g} = \sigma_{\mathcal L'}(g,w)\in M$. Since $b_w \in M$ it follows that $\sigma_{\mathcal L}(g,w) \in M$. Similarly for $w \in \Omega_Y$ we have $\sigma_{\mathcal L}(g,w) \in N$. Since $g$ fixes $v$ it also preserves the parts of the bipartition and we thus have $g \in \mathcal{U}(M, N)_v$. On the other hand, if $g \in \mathcal{U}(M, N)_v$ and $w \in \Omega_X$ then by definition $\sigma_{\mathcal L}(g, w) \in M$ and therefore $\sigma_{\mathcal L'}(g,w) \in M$. Furthermore, $\sigma_{\mathcal L'}(g,w) \in M_x$ because
\[
x^{\sigma_{\mathcal L'}(g,w)} = x^{(\lambda_w^{\mathcal L'})^{-1} \, g \, \lambda_{w^g}^{\mathcal L'}} = a_w^{g \, \lambda_{w^g}^{\mathcal L'}} = a_{w^g}^{\lambda_{w^g}^{\mathcal L'}} = x.
\]
Similarly if $w \in \Omega_Y$ then $\sigma_{\mathcal L'}(g,w) \in N_y$. Our claim is thus established.

We can now use $\mathcal L'$ to identify vertices of $\Tree_v$ with the points in the following sets $\Omega_i$, which will allow us to determine the infinite iterated wreath product structure of $Q$ and thus $\mathcal{U}(M, N)_v$ following Section~\ref{section:WreathProds}.

Let $\Omega_1 := X$ and $\Omega_2 := Y^o \times X$ and $\Omega_3 := X^o \times Y^o \times X$, and so on. To simplify our notation for this alternating of sets, let $L_1 := X$ and for $i > 1$ define
\[
L_i :=
\begin{cases}
    Y^o \quad \text{if $i$ is even;}\\
    X^o \quad \text{if $i$ is odd.}
\end{cases}
\]
Then for $i>1$ we have $\Omega_{i} = L_i \times \Omega_{i-1}$.
As in Section~\ref{section:WreathProds} we construct a rooted tree $\Tree_\emptyword$ whose vertices are the points in the sets~$\Omega_{i}$ together with a root~$\emptyword$ in which the root~$\emptyword$ is joined to every vertex in~$\Omega_{1}$, while if $i \geq 2$ and $z = (\ell,\omega) \in \Omega_{i}$ where $\ell \in L_{i}$ and $\omega \in \Omega_{i-1}$ then there is an edge from~$\omega$ to~$z$. There is a natural identification between $\Tree_\emptyword$ and $\Tree_v$ that we now describe.
First we identify their roots. Then, for each vertex $w \in V\Tree_v \setminus \{v\}$ there is a directed geodesic path of arcs from $v$ to $w$, say $a_1, \ldots, a_i$ where the origin vertex of $a_1$ is $v$ and the terminal vertex of $a_i$ is $w$. We give $w$ the label $(a_i^{\mathcal L'}, \ldots, a_1^{\mathcal L'}) \in L_i \times \cdots \times L_1 = \Omega_i$ and identify vertices of $V\Tree_v$ with their label (which is a vertex in $\Tree_\emptyword$).
 In this way the iterated wreath product
\[
W = \dots \wr_{\Omega_{3}} M_x \wr_{\Omega_{2}} N_y \wr_{\Omega_{1}} M
\]
can be viewed as a subgroup of the stabilizer $\Aut{\Tree}_v$.

Suppose $w \in \Omega_X$. We may view $w$ according to its vertex label as an element in one of the sets $\Omega_{2i}$ and the children of $w$ as elements in $L_{2i+1} \times \Omega_{2i} = X^o \times \Omega_{2i}$. The action of $W$ on level $\Omega_{2i+1}$ of $\Tree_v$ is equal to that of $M_x \wr_{\Omega_{2i}} H$ where $H := N_y \wr_{\Omega_{2i-1}} \dots \wr_{\Omega_{3}} M_x \wr_{\Omega_{2}} N_y \wr_{\Omega_{1}} M$. Notice that for all vertices $w'$ in level $\Omega_{2i}$, for any arc $a \in A(w')$ to a child $(x_0, w') \in X^o \times \Omega_{2i}$ we have $(x_0, w')^{\mathcal L'} = x_0$. Write $M_x \wr_{\Omega_{2i}} H$ as $B \rtimes H$ where $B = M_x^{\Omega_{2i}}$.
Fix $g \in W \leq \Aut{\Tree}_v$ and let $bh \in B \rtimes H$ be the element of $M_x \wr_{\Omega_{2i}} H$ that has the same action as $g$ on $\Omega_{2i+1}$. Write $b = (m_{w'})_{w' \in \Omega_{2i}}$ as a sequence indexed by the set $\Omega_{2i}$. Then for all $x_0 \in X^o$ we have that $(x_0)^{\sigma_{\mathcal{L}'}(g,w)} = x_0^{m_w}$, and it then follows easily that $\sigma_{\mathcal{L}'}(g,w) \in M_x$. A similar argument shows that $\sigma_{\mathcal{L}'}(g,w) \in N_y$ whenever $w \in \Omega_Y$. Thus $W \leq Q$.

On the other hand, for any $g \in Q = \mathcal{U}(M, N)_v$ one can (using standard arguments, e.g.,~\cite[\S3.2]{BM00}) inductively build  elements $g_i \in M_x \wr_{\Omega_{2i}} \dots \wr_{\Omega_{3}} M_x \wr_{\Omega_{2}} N_y \wr_{\Omega_{1}} M$ such that $g$ and $g_i$ induce the same permutation on $\set{w \in V\Tree}{d(v,w) \leq 2i+1}$. The resulting sequence $(g_i)_{i \in \Nat}$ lies in $W \leq \mathcal{U}(M, N)_v$ and $\mathcal{U}(M, N)_v$ is closed in the permutation topology; thus $(g_i)_{i \in \Nat}$ in its action on $\Tree_v$ induces the same permutation of $V\Tree$ as $g$. Since $\mathcal{U}(M, N)_v$ is faithful on $V\Tree$, the two elements must coincide and hence $W = \mathcal{U}(M, N)_v$.

We have identified the structure of the vertex stabiliser $\mathcal{U}(M, N)_v$ when $v \in \Omega_X$. An entirely analogous argument can be used when $v \in \Omega_Y$ with $\Omega'_1 := Y$ and $\Omega'_2 := X^o \times Y$ and $\Omega'_3 := Y^o \times X^o \times Y$, and so on. In this case the stabiliser can be identified with the iterated wreath product $\dots \wr_{\Omega'_{3}} N_y \wr_{\Omega'_{2}} M_x \wr_{\Omega'_{1}} N$ which acts as above but on a rooted tree whose root lies in $\Omega_Y$. We summarise our remarks in the following proposition.

\begin{prop}\label{prop:point_stabs_box_prod}
Under the hypotheses of Theorem~\ref{thm:simon_duke}, if $M$ and $N$ are transitive and closed and $v \in V\Tree$ then, after making the identifications described above,
\[
\mathcal{U}(M, N)_v = 
    \begin{cases}
        \dots \wr_{\Omega_{3}} M_x \wr_{\Omega_{2}} N_y \wr_{\Omega_{1}} M &\text{if $v \in \Omega_X$}, \\
        \dots \wr_{\Omega'_{3}} N_y \wr_{\Omega'_{2}} M_x \wr_{\Omega'_{1}} N &\text{if $v \in \Omega_Y$.}
    \end{cases}
\]
\end{prop}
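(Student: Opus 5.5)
The plan is to prove the two inclusions $W \leq \mathcal{U}(M,N)_v$ and $\mathcal{U}(M,N)_v \leq W$, treating only the case $v \in \Omega_X$; the case $v \in \Omega_Y$ is identical after interchanging the roles of $(M,X,x)$ and $(N,Y,y)$. The first step is to pass from $\mathcal{L}$ to the rooted arc colouring $\mathcal{L}'$. Transitivity of $M$ and $N$ lets us choose the correcting elements $b_w$, and the identity $\sigma_{\mathcal{L}'}(g,w) = b_w^{-1}\sigma_{\mathcal{L}}(g,w)b_{w^g}$ then shows that $\mathcal{U}(M,N)_v$ equals the group $Q$. Here $Q$ consists of the automorphisms fixing $v$ whose local actions lie in $M_x$ at vertices of $\Omega_X$ and in $N_y$ at vertices of $\Omega_Y$. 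The point stabilizer appears because every $g$ fixing $v$ maps the parent arc $a_w$ to the parent arc $a_{w^g}$, and all such arcs carry the colour $x$ or $y$. At the root the local action only needs to lie in $M$, since the root has no parent arc; this is why the bottom factor of $W$ is $M$ rather than $M_x$. Using $\mathcal{L}'$, label each vertex by its colour word, which identifies $\Tree_v$ with the rooted tree $\Tree_\emptyword$ built on the sets $\Omega_i$.

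For $W \leq Q$, work level by level. An element of $W$ acts on level $\Omega_{i+1}$ through $J_{i+1} = L'_{i+1} \wr_{\Omega_i} J_i$, where $L'_{i+1}$ is $M_x$ or $N_y$ according to parity, viewed as acting on $X^o$ or $Y^o$. The base-group coordinate indexed by $w \in \Omega_i$ is exactly the permutation that $g$ induces from the children of $w$ to the children of $w^g$, read through $\mathcal{L}'$. Extending this permutation by $x \mapsto x$ (or $y \mapsto y$) gives precisely $\sigma_{\mathcal{L}'}(g,w)$, so $\sigma_{\mathcal{L}'}(g,w)$ lies in $M_x$ or $N_y$ as required. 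At the root, the element acts on $\Omega_1 = X$ through $M$.

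For $Q \leq W$, take $g \in Q$. By induction on $n$, show that the restriction of $g$ to the ball of radius $n$ about $v$ agrees with some element $g_n \in J_n$, with $g_{n+1}\pi_{n+1} = g_n$. The inductive step takes $g_n$ and, for each $w \in \Omega_n$, records the base coordinate as the restriction of $\sigma_{\mathcal{L}'}(g,w)$ to $X^o$ or $Y^o$. This restriction lies in $M_x$ or $N_y$ precisely because $g \in Q$. The compatible sequence $(g_n)$ defines an element of $W$ whose action on the vertex set agrees with that of $g$. Both groups act faithfully on $\Tree_v$, so this element equals $g$. Finally, by the discussion of iterated wreath products, the inverse-limit topology agrees with the permutation topology, so the identification is also an isomorphism of topological groups.

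The main obstacle is the bookkeeping in the step that matches the base-group coordinates of the wreath product with the local actions $\sigma_{\mathcal{L}'}(g,w)$. This requires checking that the right-action conventions in \eqref{eq:WreathAction} agree with $\sigma_{\mathcal{L}'}(g,w) = (\lambda_w^{\mathcal{L}'})^{-1} g \lambda_{w^g}^{\mathcal{L}'}$, and that the coordinate indexed by $w$ (rather than by $w^g$) is the correct one. The remaining steps, namely the conjugation by the $b_w$ and the inverse-limit argument, are routine.
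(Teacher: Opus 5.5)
Your proposal is correct and follows the paper's argument essentially step for step. Like the paper, you pass to the rerooted colouring $\mathcal{L}'$ via the $b_w$ to identify $\mathcal{U}(M,N)_v$ with $Q$, match the base-group coordinates of each $J_{i+1}$ with the restricted local actions $\sigma_{\mathcal{L}'}(g,w)$ to get $W \leq Q$, and build a compatible sequence $(g_n)$ and use faithfulness to get $Q \leq W$. Your explicit treatment of the root, requiring only $\sigma_{\mathcal{L}'}(g,v) \in M$ since the parent arc $a_v$ is undefined, is in fact slightly more careful than the paper's definition of $Q$, which formally quantifies over all $w \in \Omega_X$.
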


\section{Constructing a compactly generated simple tdlc group from an arbitrary linear profinite group}\label{sec:linear}

For this section, let $K$~be a non-Archimedean local field equipped with a discrete valuation~$\nu$ and let $\localring{K}$~be the ring of integers of~$K$.  As an additive group, $K$~is a tdlc group and $\localring{K}$~is profinite.  For our application in Section~\ref{sec:ProofUncountableSimple}, when we prove Theorem~\ref{thm:uncount_simple}, we shall take $K = \padics$.  We note, however, that other examples, such as $K = \Laurentseries{\Field{p}}{t}$, are also permitted in what we do here.

Now suppose that $H$~is a compact subgroup of~$\GL[n]{K}$ for some natural number~$n$.  We can then form the semidirect product $K^{n} \rtimes H$ using the natural action of~$H$.  We shall also consider $U \rtimes H$ where $U$~is an open $H$\nbd submodule of~$K^{n}$.

\begin{lem}
\label{lem:Semidirect-Trivialcore}
The group $H$ normalizes a compact open subgroup~$U$ of $K^{n}$.  Moreover the action of~$H$ on~$U$ is faithful, so the normal core of~$H$ in~$U \rtimes H$ is trivial.
\end{lem}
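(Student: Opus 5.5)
The plan is to build $U$ by averaging the standard lattice over $H$, using compactness of $H$. Put $L_0 := \localring{K}^n$, which is a compact open subgroup of $K^n$. Consider
\[
U := \sum_{h \in H} L_0 h ,
\]
the $\localring{K}$\nbd submodule of $K^n$ generated by all the translates $L_0 h$, $h \in H$. Since $L_0 h = L_0 h'$ whenever $h' \in Hh$, we may restrict $h$ to a set of right coset representatives of $H \cap \GL[n]{\localring{K}}$ in $H$. The main thing to check is that this set of representatives is finite. The point is that $\GL[n]{\localring{K}}$ is the stabilizer of $L_0$ in $\GL[n]{K}$ and is open. So $H \cap \GL[n]{\localring{K}}$ is open in the compact group $H$ and therefore has finite index.

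With finitely many terms, $U$ is a finite sum of compact open $\localring{K}$\nbd lattices. Such a sum is again compact, being the continuous image of a finite product of compact sets. It is open because it contains $L_0$. By construction $U h = U$ for every $h \in H$, so $H$ normalizes $U$ and $U \rtimes H$ is defined. (An alternative route is to note that $H$ is bounded in $\GL[n]{K}$, so its matrix entries have valuations bounded below; then $\sum_h L_0 h \subseteq \pi^{-m}L_0$ for some $m$. Either argument works, and I would use the finite-index one.)

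For faithfulness, suppose $h \in H$ acts trivially on $U$. Since $U$ is open it contains $\pi^m \localring{K}^n$ for some $m$, and this set spans $K^n$ over $K$. Because $h$ is $K$\nbd linear and fixes a spanning set, $h = 1$.

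The core statement then follows from a standard fact. In a semidirect product $U \rtimes H$, any normal subgroup $C \leq H$ satisfies $[C, U] \leq C \cap U = 1$. Hence $C$ centralizes $U$, that is, it acts trivially on $U$, so $C = 1$ by faithfulness. Applying this to $C = \Core{U\rtimes H}{H}$ shows the core is trivial.

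I expect no real obstacle here. The only point needing care is the finiteness, or boundedness, of the orbit of $L_0$ under $H$, which is exactly where compactness of $H$ is used.
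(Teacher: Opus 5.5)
Your proof is correct and follows essentially the paper's route. In both, compactness of $H$ forces a compact open lattice to have only finitely many $H$\nbd translates, which are combined into an $H$\nbd invariant compact open $U$; the paper uses an arbitrary $W$, while you use $\localring{K}^n$ with its open stabilizer $\GL[n]{\localring{K}}$. Faithfulness and the trivial core are then argued exactly as you do, via linearity and $[C,U] \leq C \cap U = 1$.

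One small slip: ``$L_0h = L_0h'$ whenever $h' \in Hh$'' should read $h' \in (H \cap \GL[n]{\localring{K}})h$.
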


\begin{proof}
Let $W$~be any compact open subgroup of~$K^{n}$.  Using the continuity of the action, a standard compactness argument shows that $\set{ h \in H }{ W^{h} \leq W }$ is open in~$H$.  Using the compactness of~$H$, we deduce that $W$~has only finitely many $H$\nbd conjugates and hence $H$~normalizes some compact open subgroup~$U$ of~$K^{n}$.  If $h \in H$ centralises $U$ then it fixes all scalar multiples of elements of~$U$ and therefore the action of~$h$ on~$K^{n}$ is also trivial.  Since $H \leq \GL[n]{K}$ acts faithfully on $K^{n}$ it follows that $h = 1$.   Finally if $C$~denotes the core of~$H$ in~$U \rtimes H$, then $[C,U] \leq C \cap U \leq H \cap U = \1$, so $C$~is trivial.
\end{proof}

\begin{prop}
\label{prop:ExpansiveStep1}
Let $K$~be a non-Archimedean local field and $H$~be a compact subgroup of~$\GL[n]{K}$ acting on~$K^{n}$ in the natural way.  Let $\lambda \in K$ with valuation $\nu(\lambda) > 0$.  Let $\langle b \rangle$~be a discrete infinite cyclic group acting on~$K^{n}$ with the element~$b$ inducing multiplication by the scalar~$\lambda$.  Then
\[
E = K^{n} \rtimes (H \times \langle b \rangle)
\]
is a compactly generated tdlc group.  Furthermore, if $U$~is a compact open subgroup of~$K^{n}$ normalized by~$H$, then $UH$~has trivial core in $E$.
\end{prop}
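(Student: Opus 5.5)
We have to prove three things: $E$ is a tdlc group, $E$ is compactly generated, and $UH$ is core-free in $E$. The plan is to handle them in that order.

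\emph{Tdlc.} Since $b$ acts as the scalar $\lambda$ and $H$ acts linearly, $b$ commutes with $H$. Hence $H\times\langle b\rangle$ is a well-defined group acting continuously on $K^{n}$. Give $E$ the product topology on $K^{n}\times H\times\langle b\rangle$. This makes $E$ a topological group because the action map $(H\times\langle b\rangle)\times K^{n}\to K^{n}$ is continuous. It is locally compact and totally disconnected because each factor is.

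\emph{Compact open subgroup.} By Lemma~\ref{lem:Semidirect-Trivialcore}, $H$ normalizes some compact open subgroup $U$ of $K^{n}$. Then $UH$ is a compact open subgroup of $E$.

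\emph{Compact generation.} We show that $E$ is generated by the compact set $UH\cup\{b,b^{-1}\}$. Since $\nu(\lambda)>0$, conjugation by $b^{-1}$ multiplies vectors by $\lambda^{-1}$, so $b^{k}Ub^{-k}=\lambda^{\mp k}U$ depending on the side conventions. The subgroup generated contains $\lambda^{-k}U$ for all $k\geq0$. Because $U$ is compact and open, it contains $\mathfrak{m}^{r}\localring{K}^{n}$ for some $r$. Also $\lambda^{-k}\mathfrak{m}^{r}\localring{K}^{n}$ exhausts $K^{n}$, since $\nu(\lambda^{-k})\to-\infty$. So the union $\bigcup_k\lambda^{-k}U$ is all of $K^{n}$. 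Together with $H$ and $b$ this gives all of $E$.

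\emph{Core-freeness.} Let $C$ be the core of $UH$ in $E$. I expect the main care to be needed here, though it is short. The argument has three steps.
\begin{enumerate}
\item Conjugating by powers of $b$ gives $C\leq\bigcap_{k}b^{-k}(UH)b^{k}$. In coordinates this is contained in $\bigl(\bigcap_k\lambda^{k}U\bigr)H$ for the appropriate sign of $k$. Since $U$ is bounded and $\nu(\lambda^{k})\to\infty$, the intersection $\bigcap_k\lambda^{k}U$ is $\{0\}$. This shows $C\leq H$.
\item Then $C$ is a normal subgroup of $E$ contained in $H$. Hence $[C,K^{n}]\leq C\cap K^{n}\leq H\cap K^{n}=1$, so $C$ centralizes $K^{n}$.
\item As $H\leq\GL[n]{K}$ acts faithfully on $K^{n}$, this forces $C=1$. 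Alternatively, note that $C\leq\Core{U\rtimes H}{H}$ and apply the faithfulness part of Lemma~\ref{lem:Semidirect-Trivialcore}.
\end{enumerate}
The only point to check is the bookkeeping in step (i). An element $uh$ lies in $b^{-k}UHb^{k}$ exactly when $b^{k}uhb^{-k}=(b^{k}ub^{-k})h\in UH$. Because the decomposition $uh$ is unique, this is the condition $b^{k}ub^{-k}\in U$. Letting $k$ range over both signs then squeezes $u$ into $\bigcap_{k\in\Zint}\lambda^{k}U=\{0\}$.
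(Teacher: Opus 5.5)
Your proof is correct and takes essentially the same route as the paper. Compact generation comes from the $\langle b\rangle$-conjugates $\lambda^{k}U$ of $U$ exhausting $K^{n}$; core-freeness comes from these conjugates intersecting trivially, so the core of $UH$ is a normal subgroup of $E$ lying in $H$, which the faithfulness argument of Lemma~\ref{lem:Semidirect-Trivialcore} forces to be trivial (your extra bookkeeping on the decomposition $uh$ and on lattices inside $U$ just fills in details the paper leaves implicit).
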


\begin{proof}
If we equip the infinite cyclic group~$\langle b \rangle$ with the discrete topology, then we may define a continuous homomorphism $H \times \langle b \rangle \to \GL[n]{K}$ that maps each element of~$H$ to itself and $b$~to the scalar matrix~$\lambda I$.  We may therefore form the tdlc group $E = K^{n} \rtimes (H \times \langle b \rangle)$.  By Lemma~\ref{lem:Semidirect-Trivialcore}, there is a compact open subgroup~$U$ of~$K^{n}$ that is normalized by~$H$.  Since $b^{-r}Ub^{r} = \lambda^{r}U$ for all $r \in \Zint$, it follows that $K^{n}$~is contained in the subgroup generated by the $\langle b \rangle$\nbd conjugates of~$U$.  Therefore $E = \langle UH, b \rangle$ and consequently $E$~is indeed compactly generated.  Finally, the intersection of the $\langle b \rangle$\nbd conjugates of~$U$ is trivial and, hence, the intersection of the $\langle b \rangle$\nbd conjugates of~$UH$ equals~$H$.  Lemma~\ref{lem:Semidirect-Trivialcore} now implies that $UH$~has trivial core in~$E$.
\end{proof}

The following lemma involves a permutation group~$F$ on the set $\Omega = \{1,2,\dots,r\}$.  Given such~$F$, we define~$F^{+}$ to be the subgroup generated by its point stabilizers.  If $r = 1$ then $F = F^{+}$ are both trivial, while if $r = 2$ then $F^{+} = \1$ even when $F = \Sym{\Omega}$.  We assume $r \geq 3$ in the following to avoid these trivial cases.

\begin{lem}\label{lem:wreath_point_stabilizers}
Let $G$~be a compactly generated tdlc group and let $L$~be a compact subgroup with trivial core in $G$.  Let $r \geq 3$ and $F$~be a permutation group on $\Omega = \{1,2,\dots,r\}$ such that $F^{+}$~is transitive.  Form the wreath product $G \wr_{\Omega} F$ and define a homomorphism $\pi \colon G^{r} \to G/[G,G]L$ by
\[
(g_{1},g_{2},\dots,g_{r}) \mapsto [G,G]Lg_{1}g_{2}\dots g_{r}.
\]
Let $N = \ker\pi \rtimes F$.  Then
\begin{enumerate}
\item \label{i:N-GeneratedByConjugates}
$N$ is generated by the $N$-conjugates of $L \wr F$;
\item \label{i:N-FaithfulAction}
$N$ acts faithfully on the coset space of~$L \wr F$ in~$N$;
\item \label{i:N-CompactlyGen}
$N$ is compactly generated.
\end{enumerate}
\end{lem}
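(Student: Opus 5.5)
Before starting, note that $[G,G]L$ is a subgroup: $[G,G]$ is normal in $G$, so $[G,G]L$ is a subgroup containing $[G,G]$ and hence normal, with $G/[G,G]L$ abelian. Because the target is abelian, $\pi$ is a homomorphism, and $\pi$ is invariant under permutation of coordinates. So $F$ normalizes $\ker\pi$ and $N = \ker\pi\rtimes F$ is a subgroup of $G\wr_\Omega F$. Also $L^r \leq \ker\pi$, so $L\wr F\leq N$. We also need $[G,G]L$ to be open: it contains $L$, and if $L$ is not open we can instead work with the closure and the open normal subgroup it generates. I will assume $\ker\pi$ is open in $G^r$ so that $N$ is an open subgroup, and check this against the intended use; if openness fails, (iii) needs a separate argument.

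For (i), let $M$ be the normal closure of $L\wr F$ in $N$. The plan is to show $\ker\pi\leq M$. First, $M$ contains $F^+$. Second, $M$ contains each coordinate copy $L_i$ of $L$, and hence contains $[L_i, g_i]$ for $g\in G$ placed in coordinate $i$, provided that conjugating element lies in $N$. To arrange this, use elements of the form $(g,g^{-1},1,\dots)$ and $(g,h,(gh)^{-1},\dots)$, which lie in $\ker\pi$ since $g g^{-1}=1$. The key trick uses $F^+$ transitive and $r\ge 3$. Take $f\in F$ fixing point $1$ and moving $2$ to $3$ (such $f$ exists for suitable points after relabelling, using transitivity of $F^+$ and the fact that point stabilizers generate it). Let $d_{12}(g)=(g,g^{-1},1,\dots)\in\ker\pi$. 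Then $[d_{12}(g),f]$ is, up to a coordinate permutation, $(1,g^{-1},g,\dots)$-type, and therefore lies in $M$ because $f\in F^+\leq M$ and $M$ is normal in $N$. Transitivity then gives all $d_{ij}(g)$, since $F^+$ is transitive on points, and together with stabilizers this makes it transitive on enough pairs.

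Next, the $d_{ij}(g)$ together with $L_1$ and commutators give: coordinatewise $[G,G]$ (from $[d_{12}(g),d_{13}(h)]=([g,h],1,\dots)$), $L$ in each coordinate, and balancing elements. An element $(g_1,\dots,g_r)\in\ker\pi$ has $g_1\cdots g_r\in[G,G]L$. Multiplying by $d$'s moves everything into coordinate $1$, leaving $(g_1\cdots g_r,1,\dots,1)\in([G,G]L)_1\leq M$. Together with $F\cap M\supseteq F^+$, the full $F$ lies in $L\wr F$ and so in $M$. Hence $M=N$.

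For (ii), I compute the core of $L\wr F$ in $N$. That core is normal in $N$, so it meets $\ker\pi$ in a normal subgroup $C\le L^r$. Conjugating by $(g,g^{-1},1,\dots)$ for arbitrary $g$ shows that the projection of $C$ to coordinate $1$ lies in $\bigcap_g g^{-1}Lg=\Core{G}{L}=1$, so $C=1$. The core then injects into $F$, and it commutes with $\ker\pi$ modulo the trivial intersection. A nontrivial $f$ moving $i$ to $j$ fails to commute with $d_{ik}(g)$ for $g\notin Z$; more simply, $[c,d]\in$ core $\cap\ker\pi=1$ forces $d^f=d$ for all $d\in\ker\pi$, which fails when $G\ne1$. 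This step is mild.

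For (iii), let $S$ be a compact generating set of $G$. Then $N$ is generated by $L\wr F$ (which is compact), $F$, and the finitely many families $d_{1j}(s)$ for $s\in S$, together with $(s,1,\dots)$ for those $s$ already in $[G,G]L$. The cleaner route is to note that $\ker\pi$ has finite index in $G^r$ when $[G,G]L$ is open, since the quotient is a discrete, finitely generated abelian group. An open subgroup of finite index in a compactly generated group is compactly generated, so $N$ is compactly generated. The main obstacle is (i), specifically extracting the elements $d_{ij}(g)$ inside the normal closure using only $F^+$ transitivity. I would try first the commutator $[d_{12}(g),f]$ with $f$ in a point stabilizer, as above.
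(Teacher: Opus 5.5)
The genuine gap is in (iii). Your ``cleaner route'' concludes that $\ker\pi$ has finite index in $G^{r}$ because $G/[G,G]L$ is discrete, finitely generated and abelian. Such a group need not be finite, and in exactly the situation this lemma is built for it is not. Take $G=E=K^{n}\rtimes(H\times\langle a\rangle)$, with $a$ acting as a scalar $\lambda\neq1$, and $L=UH$. Every element of $K^{n}$ is a commutator $[v,a]$, and $E/K^{n}\cong H\times\langle a\rangle$, so $[E,E]L=K^{n}H$ and $E/[E,E]L\cong\langle a\rangle\cong\Zint$.

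No soft principle rescues this: a normal subgroup with infinite discrete abelian quotient need not be compactly generated (think of the kernel of a free group of rank~$2$ onto $\Zint$). So you must use the explicit generators from your first sketch, which is the paper's route, and prove that $L^{r}$ together with the compact set $Y=\{d_{ij}(s)\}$, for $s$ in a symmetric compact generating set $S$, generates $\ker\pi$. Here is how. Put $Q=\langle L^{r},Y\rangle$.
\begin{enumerate}
\item For distinct $i,j,k$ (using $r\geq3$), $[d_{ij}(s),d_{ik}(t)]$ is $[s,t]$ in coordinate $i$.
\item Conjugating by products of the $d_{ij}(u)$, $u\in S$, whose $i$th coordinates exhaust $G$, puts the normal closure of these commutators, i.e.\ $[G,G]$, into coordinate $i$ inside $Q$.
\item Since $d_{ij}(g)d_{ij}(h)$ and $d_{ij}(gh)$ differ only by an element of $[G,G]$ in coordinate $j$, induction on word length gives $d_{ij}(g)\in Q$ for all $g\in G$.
\item Your shifting argument from (i) then gives $\ker\pi\leq Q$.
\end{enumerate}
As $|N:\ker\pi|=|F|$, the compact set $L^{r}\cup Y\cup F$ generates $N$. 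No openness of $[G,G]L$ is needed. Drop your preliminary suggestion to replace $[G,G]L$ by something else, since that would change $\pi$ and hence the lemma.

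Your (ii) is correct, though it argues in the reverse order to the paper. You first kill $D\cap\ker\pi\leq L^{r}$ using core-freeness of $L$. You then use $[D,\ker\pi]\leq D\cap\ker\pi=1$, together with a support argument on $d_{ij}(g)$ with $j\notin\{i,i\sigma\}$, to kill the permutation part. The paper kills the permutation part first, via the commutator of $v\sigma$ with $d_{ij}(g)$ for some $g\notin L$. Both tacitly need $G\neq1$.

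In (i), the step ``transitivity then gives all $d_{ij}(g)$'' must be made precise. Conjugating a single $d_{ij}(g)$ by $F$ only produces $d$'s along the $F$-orbit of $\{i,j\}$, and that graph can be disconnected. For example, take $F=S_2\wr S_3$ on six points with blocks $\{1,2\},\{3,4\},\{5,6\}$. Here $F^{+}$ is transitive, yet $f=(3\,4)$, which fixes $1$, yields only the three blocks. The fix is to apply your commutator trick to every $\sigma$ in every point stabilizer. The edges $\{i,i\sigma\}$ then form a connected graph on $\Omega$, because these $\sigma$ generate the transitive group $F^{+}$. Finally telescope $d_{i_0i_1}(g)\,d_{i_1i_2}(g)\cdots d_{i_{m-1}i_m}(g)=d_{i_0i_m}(g)$ along a path, as the paper does.
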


\begin{proof}
\ref{i:N-GeneratedByConjugates}~For distinct $i,j \in \Omega$ and $g \in G$, write~$\tau_{ij}(g)$ for the element of~$G^{r}$ whose $i$th coordinate equals~$g$, $j$th coordinate equals~$g^{-1}$ and all other entries are trivial.  Observe that $\ker\pi$~is the product of $[G,G]^{r}L^{r}$ with the subgroup generated by all~$\tau_{ij}(g)$ for distinct $i,j \in \Omega$ and $g \in G$.  Let $\sigma \in F$ be a permutation that fixes some $k \in \Omega$ and such that $i\sigma = j$.  Then we calculate that
\[
[\sigma,\tau_{ki}(g)] = \sigma^{-1} \tau_{ki}(g)^{-1} \sigma \tau_{ki}(g)
= \tau_{kj}(g^{-1}) \, \tau_{ki}(g) = \tau_{ji}(g).
\]
Hence the normal closure~$N^{\ast}$ of~$L \wr F$ in~$N$ contains~$\tau_{ij}(g)$ for $g \in G$ whenever such a permutation~$\sigma$ may be found in~$F$.

Now let $i,j \in \Omega$ be arbitrary.  There exists some permutation in~$F^{+}$ that moves~$i$ to~$j$.  Hence there is a sequence $i = i_{0}$, $i_{1}$, \dots, $i_{m} = j$ and permutations $\sigma_{1}$,~$\sigma_{2}$, \dots,~$\sigma_{m}$ each of which fix some point of~$\Omega$ and such that $\sigma_{k}$~moves~$i_{k-1}$ to~$i_{k}$.  The previous paragraph shows that $\tau_{i_{k-1}i_{k}}(g) \in N^{\ast}$ for each~$k$ and hence $N^{\ast}$~also contains
\[
\tau_{i_{0}i_{1}}(g) \, \tau_{i_{1}i_{2}}(g) \cdots \tau_{i_{m-1}i_{m}}(g) = \tau_{ij}(g).
\]
Thus $N^{\ast}$~contains~$\tau_{ij}(g)$ for every distinct pair $i,j \in \Omega$ and every~$g \in G$.  By definition, $L^{r} \leq N^{\ast}$, while if $i$,~$j$ and~$k$ are distinct points of~$\Omega$, then $[\tau_{ij}(g),\tau_{ik}(h)]$~is equal to the element of~$G^{r}$ with the commutator~$[g,h]$ in the $i$th coordinate and all other entries trivial.  We therefore deduce $[G,G]^{r} \leq N^{\ast}$ and hence $N^{\ast} = N$, as claimed.

\ref{i:N-FaithfulAction}~We shall show that the normal core~$C$ of~$L \wr F$ in~$N$ is trivial.  Let $v\sigma \in C$ where $v \in L^{r}$ and $\sigma \in F$.  Assume first that $\sigma \neq 1$.  Hence there exists some $i \in \Omega$ such that $i\sigma \neq i$.  Since $r \geq 3$, there exists $j \in \Omega$ such that $j \neq i$ and $j\sigma \neq i$.  Take $g \in G \setminus L$ and consider the commutator of~$v\sigma$ with the element~$\tau_{ij}(g)$, as defined in Part~\ref{i:N-GeneratedByConjugates}.  This is necessarily an element of~$C$ and hence contained in~$L \wr F$, but we calculate that
\[
[v\sigma, \tau_{ij}(g)] = (\tau_{ij}(g^{-1}))^{v\sigma} \, \tau_{ij}(g) = \tau_{i\sigma,j\sigma}(g^{-1})^{v^{\sigma}} \tau_{ij}(g)
\]
and its $i$th coordinate is equal to~$g \notin L$.  This contradiction establishes that $\sigma = 1$.  Therefore $C \leq L^{r}$ and hence $C$~is contained in intersection of all conjugates of~$L^{r}$ by the elements~$\tau_{ij}(g)$ for distinct $i,j \in \Omega$ and $g \in G$.  Since $L$~has trivial core in~$G$, we now conclude that the projection of~$C$ onto the $i$th coordinate of~$L^{r}$ is trivial and hence $C = \1$, as required.

\ref{i:N-CompactlyGen}~Let $X$~be a compact symmetric generating set for~$G$.  The map $g \mapsto \tau_{ij}(g)$ is continuous, so the set $Y = \set{ \tau_{ij}(x) }{ i,j \in \Omega, \; x \in X }$ is compact.  Observe that $\ker\pi$~is generated by $L^{r}$ together with~$Y$ and hence is compactly generated.  Since $\ker\pi$~has finite index in~$N$, it now follows that $N$~is also compactly generated.
\end{proof}

We now combine the two previous results and then list some properties of the groups constructed that will feed into Theorem~\ref{thm:simon_duke}.

Let $K$~be a non-Archimedean local field and $H$~be a compact subgroup of~$\GL[n]{K}$ with its natural action on~$K^{n}$.  Apply Lemma~\ref{lem:Semidirect-Trivialcore} to produce a compact open subgroup~$U$ of~$K^{n}$ that is normalized by~$H$. According to Proposition~\ref{prop:ExpansiveStep1} we can form the semidirect product $E = K^{n} \rtimes (H \times \langle a \rangle)$ where $a$~is an element of infinite order that acts on~$K^{n}$ by multiplying by a chosen scalar~$\lambda$ with positive valuation.  We have noted that this is a compactly generated tdlc group.  Set $L = UH$, which is a compact open subgroup of~$E$.  Now choose a permutation group~$F$ on some set $\Omega = \{1,2,\dots,r\}$, where $r \geq 3$, with the property that the subgroup $F^{+}$ generated by its point stabilizers is transitive.  Define $\pi \colon E^{r} \to E/[E,E]L$ by
\begin{equation}
(g_{1},g_{2},\dots,g_{r}) \mapsto [E,E]Lg_{1}g_{2}\dots g_{r}.
\label{eq:pi-map}
\end{equation}
Lemma~\ref{lem:wreath_point_stabilizers} tells us that $N = \ker\pi \rtimes F$ is a compactly generated tdlc group that is generated by the conjugates of $V = L \wr F$ in~$N$. Moreover, $V$ is a compact open subgroup of $N$.

One important comment is in order at this point: in the discussion below we will need topological properties of $N$ when we view it as a permutation group equipped with the permutation topology, rather than simply the topology arising from the construction described, in order to be able to apply Theorem~\ref{thm:simon_duke}.  We shall describe now how to view it as a permutation group and deduce its properties with the permutation topology from those established in Lemma~\ref{lem:wreath_point_stabilizers}.

Let $Y$~denote the coset space of~$V$ in~$N$. As remarked in Section~\ref{sec:topological_groups}, since $V$ is a compact open subgroup with trivial core in $N$,  we can identify $N$ with its Schlichting completion $\hat{N}\leq \Sym{Y}$ and, under this identification, we can view $N\leq \Sym{Y}$ as a transitive closed tdlc permutation group with compact open stabilizers (under the permutation topology). Moreover, the point stabilizers in this action of $N$ are the conjugates of $V$ and, by Lemma~\ref{lem:wreath_point_stabilizers}, $N$ is generated by point stabilizers. Additionally, since $N$ and $\hat{N}$ are topologically isomorphic, point \ref{i:N-CompactlyGen} of Lemma~\ref{lem:wreath_point_stabilizers} implies that $\hat{N}$ is compactly generated under the permutation topology. Finally, we observe that $N$ is not semiregular because $V$ is non-trivial.

We may therefore, with an appropriate choice of permutation group~$M$, use the above $N$ as input for the box product and universal group construction. Theorem~\ref{thm:simon_duke} and Proposition~\ref{prop:point_stabs_box_prod} then yield:

\begin{prop}
\label{prop:ApplicationOfBoxProduct}
Let $F$~be a permutation group of degree~$r \geq 3$ such that the subgroup~$F^{+}$ of~$F$ generated by point stabilizers is transitive, let $K$~be a non-Archimedean local field, $H$~be a compact subgroup of~$\GL[n]{K}$ and $U$~be a compact open $H$\nbd submodule of~$K^{n}$.  Write $N = \ker\pi \rtimes F$ where $\pi$~is the map given in Equation~\eqref{eq:pi-map} viewed as a permutation group on the coset space~$Y$ of $V = UH \wr F$ in~$N$.  Furthermore, let $M$~be a non-trivial closed transitive permutation group on some set~$X$ that is compactly generated, has compact point stabilizers, and is generated by its point stabilizers 
and let $S$~be the group $\mathcal{U}(M,N) \leq \Aut{\Tree}$.  Then
\begin{enumerate}
\item $S$~is a non-discrete, compactly generated and abstractly simple tdlc group;
\item \label{i:BoxAppStabilizers} For $v \in \Omega_X \subseteq V\Tree$ choose $x \in X$ and let $\Omega_1 := X, \, \Omega_{2i} := (Y\setminus \{V\})\times \Omega_{2i-1}$ and $\Omega_{2i+1} := (X\setminus \{x\})\times \Omega_{2i}$, then under the identification described in Section~\ref{sec:groups_acting_on_trees} we have that the stabilizer of $v$ in $S$ is an iterated wreath product:
\[S_v = 
\dots \wr_{\Omega_6} V \wr_{\Omega_5} M_{x} \wr_{\Omega_4} V \wr_{\Omega_3} M_{x} \wr_{\Omega_2} V \wr_{\Omega_1} M.
\]
\end{enumerate}
\end{prop}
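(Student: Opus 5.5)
The plan is to verify the hypotheses of Theorem~\ref{thm:simon_duke} for the pair $(M,N)$, read off parts (i)--(vii), and then apply Proposition~\ref{prop:point_stabs_box_prod} to identify $S_v$.

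\textbf{Properties of $N$.} The facts about $N$ come from the discussion after Lemma~\ref{lem:wreath_point_stabilizers}. As a permutation group on $Y$, $N$ is transitive and closed. Its point stabilizers are the conjugates of $V$, which are compact. It is generated by point stabilizers by Lemma~\ref{lem:wreath_point_stabilizers}\ref{i:N-GeneratedByConjugates}, and it is compactly generated in the permutation topology by part~\ref{i:N-CompactlyGen}, transported through the topological isomorphism $N\cong\hat N$. Transitivity gives a single orbit.

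\textbf{Properties of $M$.} By hypothesis $M$ has the same properties: closed, transitive, compactly generated, with compact point stabilizers, and generated by point stabilizers. It is non-trivial, so $|X|\ge 2$. Also $|Y|\ge 2$, since $V$ is a proper subgroup of $N$: it is compact, while $N$ is not compact because $E$ is not compact (it contains the discrete infinite cyclic group $\langle a\rangle$ as a closed subgroup). So the standing hypotheses of Theorem~\ref{thm:simon_duke} hold.

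\textbf{Part (i).} Each required property follows from one part of Theorem~\ref{thm:simon_duke}:
\begin{enumerate}
\item closedness of $S$ follows from part (iii), and local compactness from part (v) because point stabilizers in $M$ and $N$ are compact;
\item total disconnectedness is automatic for closed subgroups of $\Aut{\Tree}$ in the permutation topology;
\item compact generation follows from part (vi), since both groups are closed, compactly generated, have compact stabilizers, are transitive and have finitely many orbits;
\item simplicity follows from part (iv), since both are generated by point stabilizers and $M$ is transitive; this simplicity is abstract;
\item non-discreteness follows from part (vii), since $N$ is not semiregular ($V\neq 1$, as $U\neq 1$ is open in $K^n$).
\end{enumerate}

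\textbf{Part (ii).} Apply Proposition~\ref{prop:point_stabs_box_prod} with $v\in\Omega_X$ and base point $y:=V\in Y$ (the trivial coset). The stabilizer $N_y$ is exactly $V$, and the sets $\Omega_i$ there coincide with the ones in the statement, where $Y^o=Y\setminus\{V\}$ and $X^o=X\setminus\{x\}$. Substituting $N_y=V$ into the formula of Proposition~\ref{prop:point_stabs_box_prod} gives the displayed iterated wreath product.

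\textbf{Expected obstacle.} The main subtle point is compatibility of topologies: Theorem~\ref{thm:simon_duke} needs the properties of $N$ in the permutation topology on $Y$, not in its original topology. This is handled by the Schlichting completion remark, which applies because $V$ is compact, open and core-free by Lemma~\ref{lem:wreath_point_stabilizers}\ref{i:N-FaithfulAction}. The one genuinely new check is $|Y|\ge 2$, equivalently $V\ne N$, which the non-compactness of $N$ settles.
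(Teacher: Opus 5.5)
Your proposal is correct and follows the paper's own argument. The paper likewise reads off the properties of $N$ from Lemma~\ref{lem:wreath_point_stabilizers} and the Schlichting-completion remark (transitive, closed, compact open stabilizers, generated by stabilizers, compactly generated, not semiregular since $V\neq 1$), then applies Theorem~\ref{thm:simon_duke} and Proposition~\ref{prop:point_stabs_box_prod} with $N_y=V$. Your extra check that $|Y|\geq 2$, which the paper leaves implicit, is harmless. To make it airtight, note explicitly that $N$ is non-compact because $\ker\pi$ contains the closed subgroup $\set{\tau_{12}(g)}{g\in E}\cong E$.
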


\noindent
Recall that the point stabilizers are open in~$S$ since it is equipped with the permutation topology.  Hence Part~\ref{i:BoxAppStabilizers} describes certain open subgroups of~$S$.

\begin{cor}\label{cor:ApplicationOfBoxProduct}
Every profinite group which is linear over a non-Archimedean local field appears as a closed subgroup of a compactly generated simple tdlc group.
\end{cor}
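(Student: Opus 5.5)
Let $P$ be a profinite group that is linear over a non-Archimedean local field $K$. That is, $P$ is (topologically isomorphic to) a compact subgroup $H$ of $\GL[n]{K}$ for some $n$. The plan is to feed exactly this $H$ into Proposition~\ref{prop:ApplicationOfBoxProduct} and to locate $H$ as a closed subgroup of a vertex stabilizer of the resulting simple group $S$.

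\textbf{Choosing the inputs.} First apply Lemma~\ref{lem:Semidirect-Trivialcore} to obtain a compact open $H$\nbd submodule $U$ of $K^{n}$. For $F$ take, say, $\Sym{\{1,2,3\}}$: its point stabilizers generate the whole group, so $F^{+}$ is transitive and $r=3$. For $M$ we need a non-trivial closed transitive permutation group that is compactly generated, has compact point stabilizers and is generated by its point stabilizers. A finite group such as $\Sym{\{1,2,3\}}$ acting naturally on three points satisfies all of these trivially. Proposition~\ref{prop:ApplicationOfBoxProduct} then gives a non-discrete, compactly generated, abstractly simple tdlc group $S=\mathcal{U}(M,N)$.

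\textbf{Locating $H$.} By Part~\ref{i:BoxAppStabilizers}, for $v\in\Omega_X$ the stabilizer $S_v$ is the iterated wreath product
\[
\dots \wr_{\Omega_2} V \wr_{\Omega_1} M, \qquad V = UH\wr F.
\]
The subgroup $S_v$ is open in $S$, hence closed. So it suffices to embed $H$ as a closed subgroup of $S_v$, and a chain of closed embeddings does this:
\begin{itemize}
\item $H$ is closed in $UH$, being compact.
\item $UH$ embeds as a single coordinate of the base group of $V$, so it is closed in $V$.
\item $V$ embeds as one coordinate factor at level~$2$, giving an element $(1,\dots,1,(b,1),1,\dots)$ of the inverse limit with all deeper coordinates trivial.
\end{itemize}

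The one point needing care is that this level-$2$ embedding $\iota\colon V\to S_v$ is a continuous injective homomorphism. Continuity follows because the topology on $S_v$ is induced from the product topology on $\prod J_i$, and each coordinate map is continuous by the description of the wreath product topology in Section~\ref{section:WreathProds}. Injectivity is clear. Since $V$ is compact and $S_v$ is Hausdorff, $\iota(V)$ is compact, hence closed, and $\iota$ is a topological isomorphism onto its image. Therefore $\iota(H)$ is a closed subgroup of $S$ topologically isomorphic to $P$.

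The main obstacle is checking that these identifications are topological and not merely abstract. Compactness of $H$ handles this uniformly through the compact-to-Hausdorff argument above. One must also confirm that $H$ keeps its original topology inside $E$, and hence inside $N$ via the Schlichting identification. This holds because $N$ and $\hat N$ are topologically isomorphic, as recorded before Proposition~\ref{prop:ApplicationOfBoxProduct}.
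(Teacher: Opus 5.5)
Your proposal is correct and follows the paper's own (implicit) argument: apply Proposition~\ref{prop:ApplicationOfBoxProduct} to the given compact $H\leq\GL[n]{K}$, observe that $H\leq UH\leq V\leq S_v\leq S$ via Part~\ref{i:BoxAppStabilizers}, and use compactness of $H$ to get closedness in $S$. One small correction: in the coordinates of \eqref{eq:invlimit} the deeper coordinates of your element cannot literally be trivial, since the compatibility condition $g_i\pi_i=g_{i-1}$ forbids this; you should instead carry the level-$2$ element into each $J_i$ via the complement inclusions $J_2\leq J_3\leq\cdots$ (so its base-group components are trivial at all deeper levels), and your continuity and compactness argument then goes through unchanged.
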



\section{Distinguishing commensurability in some iterated wreath products} \label{sec:uniform}

\subsection{Snopce's groups and their properties}
\label{sub:Snopce}

Let $p$~be a prime number which for convenience we shall assume is odd.  (When we use the following in Section~\ref{sec:ProofUncountableSimple} we shall further specialize to $p \geq 7$, but that is not necessary for the content of this section.)  For $\alpha \in \units{\padicZ}$, we construct the
$\padicZ$\nbd Lie ring~$L_{3}(\alpha)$ which, as a $\padicZ$\nbd
module, is free with basis~$\{x,e_{2},e_{3}\}$ and where the Lie
bracket is given by
\[
  [e_{2},e_{3}] = 0, \qquad [e_{2},x] = \alpha e_{3}, \qquad
  [e_{3},x] = e_{2} + e_{3}.
\]
As noted in~\cite[Proposition~3.1]{snopce}, if $\alpha,\beta \in
\units{\padicZ}$, then $L_{3}(\alpha) \cong L_{3}(\beta)$ if and only
if $\alpha = \beta$.  Furthermore, $\alpha$~is an invariant of the
isomorphism type of the $\padics$\nbd Lie algebra $\padics
\otimes_{\padicZ} L_{3}(\alpha)$.  Since $p$~is odd, $pL_{3}(\alpha)$~is a powerful $\padicZ$\nbd Lie ring.

Let $G_{3}(\alpha)$~be the uniform pro\nbd$p$ group corresponding
to~$pL_{3}(\alpha)$.  Thus $G_{3}(\alpha)$~is generated, as a
pro\nbd$p$ group, by three elements $y$,~$z_{2}$ and~$z_{3}$ such that
$\overline{ \langle z_{2},z_{3} \rangle} \cong \padicZ \oplus \padicZ$
and $G_{3}(\alpha) \cong (\padicZ \oplus \padicZ) \rtimes \padicZ$.
The action of~$y$ on the normal subgroup is determined by the
$\padicZ$\nbd Lie ring~$pL_{3}(\alpha)$, but we shall not refer to
an explicit presentation for~$G_{3}(\alpha)$ in our argument.  Nevertheless, we remark that the group~$G_3(\alpha)$ fits the blueprint of Section~\ref{sec:linear} (see, for example, Proposition~\ref{prop:ExpansiveStep1}) when we take a subgroup~$H$ of~$\GL[2]{\padics}$ isomorphic to~$\padicZ$ acting as prescribed by the $\padicZ$\nbd Lie ring~$pL_{3}(\alpha)$ with the compact $H$\nbd submodule $U = \padicZ^{2}$ of $\padics^{2}$.  The group $G_3(\alpha)$ is $U\rtimes H$ with respect to this action.

We begin with an observation that will inform our choice of the parameter~$\alpha$.
In the following argument, when $X$~is a non-empty subset of~$\padics$, write~$(X)^{2}$ for the set of squares $\set{ x^{2} }{ x \in X }$.

\begin{lem}\label{lem:alpha_uncount}
  The set $\mathcal{A} = \set{ \alpha \in \padicZ }{ 1 + 4\alpha
    \notin (\padics)^{2} }$ is uncountable.
\end{lem}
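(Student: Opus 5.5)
Since $\padicZ$ is uncountable, it suffices to exhibit an uncountable subset of $\mathcal{A}$. The natural choice is a nonempty open subset of $\padicZ$: every such set contains a coset $a + p^{k}\padicZ$, which is in bijection with $\padicZ$ and hence uncountable. The plan is to find one $\alpha_0\in\padicZ$ with $1+4\alpha_0$ a non-square in $\padics$, and then show that the set of non-squares in $\units{\padicZ}$ is open. This will give a whole neighbourhood of $\alpha_0$ inside $\mathcal{A}$.

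First, I choose $\alpha_0$. Since $p$ is odd, $4$ is a unit. Fix an integer $c$ that is a quadratic non-residue modulo $p$; one exists because $p$ is odd. Then solve $1+4\alpha_0 = c$, so $\alpha_0 = (c-1)/4 \in \padicZ$. I claim $c$ is not a square in $\padics$. Suppose $c = x^{2}$ with $x\in\padics$. Then $\nu(x)=0$, so $x\in\units{\padicZ}$. Reducing modulo $p$ shows that $c$ is a square modulo $p$, which is a contradiction.

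Next, I show openness. Take any $\alpha\in\alpha_0 + p\padicZ$. Then $1+4\alpha \equiv c \pmod p$, so $1+4\alpha$ is a unit whose residue is a non-residue. The same reduction argument shows $1+4\alpha\notin(\padics)^{2}$. Hence $\alpha_0+p\padicZ\subseteq\mathcal{A}$. This coset is homeomorphic to, and in bijection with, $\padicZ$ via $\beta\mapsto\alpha_0+p\beta$, so it has cardinality $2^{\aleph_0}$ and $\mathcal{A}$ is uncountable.

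No step here is a real obstacle. The only point needing care is the reduction argument: a square in $\padics$ of unit valuation is the square of a unit, so its residue is a square in $\Field{p}$. This relies on $\nu(x^{2}) = 2\nu(x)$, which forces $\nu(x)=0$. Hensel's lemma is not needed, since only this one direction is used. The oddness of $p$ is used twice: to guarantee that a non-residue exists, and to make $4$ invertible so that $\alpha_0$ lies in $\padicZ$.
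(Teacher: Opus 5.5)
Your proof is correct and follows essentially the same route as the paper: a $p$-adic unit whose reduction mod $p$ is a quadratic non-residue cannot be a square in $\padics$, and since $4$ is a unit for odd $p$, the bijection $\alpha \mapsto 1+4\alpha$ carries an uncountable set of such elements into $\mathcal{A}$. The only difference is packaging. The paper derives the residue criterion from $\units{\padics} = \langle p \rangle \times \units{\padicZ}$ and leaves the uncountability implicit, whereas you use a direct valuation argument and make the uncountability explicit via the coset $\alpha_0 + p\padicZ$.
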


\begin{proof}
  Since $\units{\padics} = \langle p \rangle \times \units{\padicZ}$,
  \[
  (\units{\padics})^{2} = \langle p^{2} \rangle \times
  (\units{\padicZ})^{2}
  \]
  and hence
  \[
  \padicZ \cap (\padics)^{2} = \{0\} \cup \set{ p^{2k}u^{2} }{ k \geq
    0, \; u \in \units{\padicZ} }.
  \]
  Therefore if $v \in \padicZ$ such that its image in $\padicZ /
  p\padicZ \cong \Field{p}$ is not a square, then $v \notin
  (\padics)^{2}$.  It follows that $\padicZ \setminus (\padics)^{2}$,
  and hence $\mathcal{A} = \set{ \tfrac{1}{4}(\beta-1) }{ \beta \in
    \padicZ \setminus (\padics)^{2}}$, is uncountable.
\end{proof}

Choose $\alpha \in \mathcal{A}$.  Define
\[
A = \begin{pmatrix}
  0 & \alpha \\ 1 & 1
\end{pmatrix},
\]
which is the matrix of the restriction of~$\ad{x}$ to the space
$\Span[\padicZ]{e_{2},e_{3}}$.  The characteristic polynomial of~$A$
is
\[
c_{A}(X) = \det \begin{pmatrix}
  X & -\alpha \\ -1 & X-1
\end{pmatrix}
= X^{2} - X - \alpha,
\]
so the eigenvalues of~$A$ are
\[
\half \bigl( 1 \pm \sqrt{ 1 + 4\alpha } \bigr).
\]
Since $\alpha \in \mathcal{A}$, these eigenvalues do not lie
in~$\padics$.  Furthermore, if $r \geq 1$, the matrix of the
restriction~$\ad{(p^{r}x)}$ to the space
$\Span[\padicZ]{p^{r}e_{2},p^{r}e_{3}}$ is~$p^{r}A$, whose eigenvalues
are $\half p^{r} \bigl( 1 \pm \sqrt{ 1 + 4\alpha } \bigr)$, none of
which lie in~$\padics$.

\begin{lem}
  \label{lem:LieIdeal}
  Let $\alpha \in \mathcal{A}$, let $s \geq 1$ and let $L = p^{s}
  L_{3}(\alpha)$.  If $I$~is an ideal of~$L$ contained
  in~$\Span[\padicZ]{p^{s}e_{2},p^{s}e_{3}}$, then the rank of~$I$ is
  either~$0$ or~$2$.
\end{lem}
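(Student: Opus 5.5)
Since $I$ is an ideal of $L$ contained in $P := \Span[\padicZ]{p^{s}e_{2},p^{s}e_{3}}$, it is in particular a $\padicZ$\nbd submodule of the free module $P$ of rank~$2$, so its rank is $0$, $1$ or $2$. The plan is to rule out rank~$1$. The ideal property gives $[I, p^{s}x] \subseteq I$, and in the basis $\{p^{s}e_{2},p^{s}e_{3}\}$ the map $\ad{(p^{s}x)}$ restricted to $P$ has matrix $p^{s}A$, as computed just before the statement.

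Suppose $I$ has rank~$1$. First pass to the $\padics$\nbd span: $D := \padics \otimes_{\padicZ} I = \padics I$ is a one-dimensional $\padics$\nbd subspace of $\padics e_{2} \oplus \padics e_{3}$. Because $\ad{(p^{s}x)}$ is $\padicZ$\nbd linear and maps $I$ into $I$, its $\padics$\nbd linear extension maps $D$ into $D$. Take a nonzero $w \in I$. Then $[w,p^{s}x] = \mu w$ for some $\mu \in \padics$, so $w$ is an eigenvector of $p^{s}A$ (acting on coordinate vectors, from the appropriate side) with eigenvalue $\mu \in \padics$.

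This contradicts the earlier observation that, since $\alpha \in \mathcal{A}$, the eigenvalues $\half p^{s}\bigl(1 \pm \sqrt{1+4\alpha}\bigr)$ of $p^{s}A$ do not lie in $\padics$. Equivalently, the characteristic polynomial $X^{2} - p^{s}X - p^{2s}\alpha$ has no root in $\padics$, because its discriminant $p^{2s}(1+4\alpha)$ is not a square in $\padics$. Hence rank~$1$ is impossible, and the rank of $I$ is $0$ or $2$.

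The only point requiring care is the bookkeeping of left versus right action, i.e.\ whether one uses $A$ or its transpose. This is harmless, since a matrix and its transpose have the same characteristic polynomial. Beyond that there is no real obstacle.
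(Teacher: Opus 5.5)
Your proof is correct and is essentially the paper's argument: a rank-one ideal inside $\Span[\padicZ]{p^{s}e_{2},p^{s}e_{3}}$ would be spanned by an eigenvector of $p^{s}A$ with eigenvalue in the base field, which is impossible because $1+4\alpha \notin (\padics)^{2}$. The only difference is cosmetic. The paper takes a generator of $I$ and gets an eigenvalue in $\padicZ$, whereas you pass to the $\padics$-span and get an eigenvalue in $\padics$; either gives the same contradiction.
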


\begin{proof}
  Suppose that $\beta,\gamma \in \padicZ$ such that $I =
  \Span[\padicZ]{\beta p^{s}e_{2} + \gamma p^{s}e_{3}}$ is an ideal
  of~$L$.  Then $[\beta p^{s}e_{2} + \gamma p^{s}e_{3}, p^{s}x] =
  \lambda (\beta p^{s}e_{2} + \gamma p^{s}e_{3})$ for some $\lambda
  \in \padicZ$.  Hence $(\beta,\gamma)$~is an eigenvector of~$p^{s}A$
  with eigenvalue~$\lambda$, which is a contradiction.
\end{proof}

\begin{cor}
  \label{cor:Homs}
  Let $\alpha \in \mathcal{A}$, let $r \geq 0$ and $\phi$~be a
  surjective homomorphism from the closed
  subgroup~$G_{3}(\alpha)^{p^{r}}$ of~$G_{3}(\alpha)$ onto a uniform
  pro\nbd$p$ group~$H$.  Then, up to isomorphism, either $H = \1$,
  $\padicZ$ or~$G_{3}(\alpha)^{p^{r}}$.
\end{cor}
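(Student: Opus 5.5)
Throughout, write $G = G_{3}(\alpha)^{p^{r}}$. The plan is to pass to Lie rings via the Lazard correspondence between uniform pro\nbd$p$ groups and powerful $\padicZ$\nbd Lie rings. Since $G_{3}(\alpha)$ is uniform, $G$ is again uniform, and it corresponds to the powerful Lie ring $L = p^{s}L_{3}(\alpha)$ with $s = r+1$. This is spanned by $p^{s}x$, $p^{s}e_{2}$, $p^{s}e_{3}$.

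\textbf{Step 1: the kernel is an ideal.} Let $\phi\colon G \to H$ be surjective with $H$ uniform, and set $K = \ker\phi$. Then $G/K \cong H$ is torsion-free. This is the key input: a normal subgroup of a uniform group with torsion-free (uniform) quotient is isolated, and it corresponds under the Lazard correspondence to an ideal $I$ of $L$ with $L/I$ torsion-free. Such an $I$ is a $\padicZ$\nbd direct summand of $L$. Moreover, $\dim H = 3 - \operatorname{rank} I$, and $H$ is the uniform group associated to the powerful Lie ring $L/I$.

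\textbf{Step 2: case analysis on $I$.} Let $J = \Span[\padicZ]{p^{s}e_{2},p^{s}e_{3}}$. Since $[L,L] \leq J$, the ring $L/J$ is abelian of rank $1$.
\begin{itemize}
\item If $I \leq J$, then Lemma~\ref{lem:LieIdeal} gives $\operatorname{rank} I \in \{0,2\}$. If the rank is $0$, then $I = 0$ and $H \cong G$. If the rank is $2$, then $I$ is a rank-$2$ summand contained in the rank-$2$ summand $J$, so $I = J$ and $H \cong \padicZ$.
\item If $I \not\leq J$, then $I$ contains an element $u = \mu p^{s}x + w$ with $w \in J$ and $\mu \neq 0$. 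Bracketing with $J$ gives $[J,u] = \mu p^{s}A(J) \leq I$. Because the eigenvalues are not in $\padics$, $A$ is invertible (its determinant is $-\alpha$, a unit), so $I \cap J$ has rank $2$. Together with $u$, this forces $\operatorname{rank} I = 3$. Since $I$ is a summand, $I = L$ and $H = \1$.
\end{itemize}
Rather than insisting that $\alpha$ is a unit, one only needs $\alpha \neq 0$: the eigenvalues lie outside $\padics$, so $0$ is not an eigenvalue.

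\textbf{Main obstacle.} The delicate point is Step 1: justifying that $\ker\phi$ corresponds to a Lie ideal which is a direct summand, so that the quotient Lie ring is exactly $L/I$. I would cite the standard fact (Dixon--du Sautoy--Mann--Segal) that, for a uniform $G$, closed normal subgroups $K$ with $G/K$ torsion-free correspond to isolated ideals, and that the Lie ring of $G/K$ is $L/I$. An alternative route is to pass to $\padics$\nbd Lie algebras. Here $H$ corresponds to a quotient of $\padics \otimes L$ by an ideal, and the same rank argument applies. The isomorphism type is then recovered from the uniform structure, using that a uniform group is determined by its Lie ring and that $K$ is determined by its rank in each case.
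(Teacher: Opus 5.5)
Your proposal is correct and follows essentially the paper's route: via \cite[Theorem~9.10]{DDMS}, $\ker\phi$ becomes an ideal $I$ of $L = p^{r+1}L_{3}(\alpha)$ with torsion-free quotient $L/I \cong L_{H}$, and Lemma~\ref{lem:LieIdeal} applied to $I \cap \Span[\padicZ]{p^{r+1}e_{2},p^{r+1}e_{3}}$ leaves only the three stated possibilities. Your case split into $I \leq J$ versus $I \not\leq J$ (using $\det A = -\alpha \neq 0$), together with your use of the fact that $I$ is a direct summand, just makes explicit two steps the paper leaves tacit: why $\ker\phi_{\ast} \cap M = \0$ forces $\ker\phi_{\ast} = \0$, and why the rank of $\ker\phi_{\ast}$ pins it down exactly.
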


\begin{proof}
  Let $L_{H}$~be the $\padicZ$\nbd Lie ring associated to~$H$.
  Note that $L = p^{r+1}L_{3}(\alpha)$~is the $\padicZ$\nbd Lie ring
  associated to~$G_{3}(\alpha)^{p^{r}}$.  According to
  \cite[Theorem~9.10]{DDMS}, there is a corresponding surjective
  $\padicZ$\nbd Lie ring homomorphism $\phi_{\ast} \colon L \to
  L_{H}$.  Write $M = \Span[\padicZ]{p^{r+1}e_{2},p^{r+1}e_{3}}$.  By
  Lemma~\ref{lem:LieIdeal} applied with $s = r+1$, $\ker\phi_{\ast} \cap M$~is either $\0$ or~$M$.  If $\ker\phi_{\ast} \cap M = \0$ then it must be the case that $\ker\phi_{\ast} = \0$ and therefore $\phi$~is an isomorphism.
  Otherwise $M \leq \ker\phi_{\ast}$ and hence either $\ker\phi_{\ast}
  = M$ or $\ker\phi_{\ast} = L$.  We deduce that in these cases either
  $H \cong \padicZ$ or $H = \1$.
\end{proof}

\subsection{Non-commensurability of certain iterated wreath products}
\label{sub:IteratedWreathLocal}

In this subsection we will consider certain iterated wreath products $W(\alpha)$ involving the groups $G_3(\alpha)$, and their Sylow pro-$p$ subgroups. Moreover, we will prove that $W(\alpha)$ and $W(\beta)$ cannot be commensurable for $\alpha\neq \beta\in \mathcal{A}$. The vertex stabilizers in the groups $S$ from Proposition~\ref{prop:ApplicationOfBoxProduct} are a particular case of the wreath products that we consider here. 

Consider $\alpha \in \mathcal{A}$ and fix a pro\nbd$p'$ group~$F$ acting continuously on some set~$\Omega$.
Define $V(\alpha) = G_{3}(\alpha) \wr_{\Omega} F$. Note that, by choice of~$F$ as a
$p'$\nbd group, $G_{3}(\alpha)^{\Omega}$~is the Sylow pro\nbd$p$ group
of~$V(\alpha)$.  We then construct an iterated wreath product in the
following way.  

Suppose that $(M_{n})_{n \in \Nat}$ is a sequence of pro\nbd$p'$ groups. Let $J_{1} = M_{1}$ and pick a continuous action of $M_{1}$ on a discrete set $X_1$.
Define $J_{2} = V(\alpha) \wr_{X_{1}} M_1$, equipped with the product topology so that $J_{2}$~is a profinite group.  Suppose, as an inductive hypothesis, that we have constructed a sequence $J_{1}$,~$J_{2}$, \dots,~$J_{2n}$ of iterated wreath products.  Pick a continuous action of~$J_{2n}$ on an infinite discrete set~$X_{2n}$ and define $J_{2n+1} = M_{n+1} \wr_{X_{2n}} J_{2n}$.  Again we equip~$J_{2n+1}$ with the product topology.  Now pick a continuous action of~$J_{2n+1}$ on an infinite discrete set~$X_{2n+1}$ and define $J_{2n+2} = V(\alpha) \wr_{X_{2n+1}} J_{2n+1}$, again viewed as a profinite group.  In this way, we construct an inverse system
\[
\dots \to J_{3} \to J_{2} \to J_{1} \to \1
\]
of profinite groups.  Finally set $W = W(\alpha) = \invlim J_{n}$.

We now define an associated sequence of pro\nbd$p$ groups that will coincide with a Sylow pro-$p$ subgroup of $W(\alpha)$.  Define $P_{1} = G_{3}(\alpha)^{\Omega \times X_{1}} = \bigl(G_{3}(\alpha)^{\Omega}\bigr) \wr_{X_{1}} \1$, which is a closed subgroup of~$J_{2}$ and is the Sylow pro\nbd$p$ group of~$J_{2}$.
Suppose that we have constructed a Sylow pro\nbd$p$ subgroup~$P_{n}$ of~$J_{2n}$.  Define $P_{n+1} = \bigl(G_{3}(\alpha)^{\Omega}\bigr) \wr_{X_{2n+1}} P_{n}$.  Since $M_{n+1}$ is a $p'$\nbd group, $P_{n+1}$~is a Sylow pro\nbd$p$ subgroup of $J_{2n+2} = V(\alpha) \wr_{X_{2n+1}} (M_{n+1} \wr_{X_{2n}} J_{2n} )$.  Set $P = \invlim P_{n}$, which we view as a subgroup of~$W$ in the natural way.  Then $P$~is a Sylow pro\nbd$p$ subgroup of~$W$ and, by construction, each~$P_{n}$ is a
torsion-free pro\nbd$p$ group.  We shall write $\pi_{n} \colon P \to P_{n}$ for the natural maps determined by the inverse limit.

\begin{lem}
  \label{lem:Main}
  Let $\alpha,\beta \in \mathcal{A}$.  Suppose that $W = W(\alpha)$
  has a closed subgroup that is isomorphic to $G_{3}(\beta)$ for
  some~$\beta$.  Then $\alpha = \beta$.
\end{lem}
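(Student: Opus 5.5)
Let $Q \leq W$ be a closed subgroup with $Q \cong G_{3}(\beta)$. The plan is to push $Q$ down to a single copy of $G_{3}(\alpha)$ and then read off $\beta$ using Corollary~\ref{cor:Homs}.

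First, $Q$ is a pro\nbd$p$ group. By Sylow's theorem for profinite groups it is therefore contained in a conjugate of the Sylow pro\nbd$p$ subgroup~$P$. Conjugating, I may assume $Q \leq P = \invlim P_{n}$.

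Since $P$ is an inverse limit and $Q$ is nontrivial, some $\pi_{n}$ is nontrivial on~$Q$. Recall that
\[
P_{n+1} = \bigl(G_{3}(\alpha)^{\Omega}\bigr) \wr_{X_{2n+1}} P_{n}.
\]
Here the top group is the image in $P_{n}$, and the base group is a product of copies of $G_{3}(\alpha)$ indexed by $\Omega \times X_{2n+1}$. Choose $n$ minimal with $Q\pi_{n} \neq \1$ (take $P_{0}=\1$). Then $Q\pi_{n-1} = \1$, so $Q\pi_{n}$ lies in the base group $G_{3}(\alpha)^{\Omega \times X_{2n-1}}$. As $Q\pi_{n}$ is nontrivial, some coordinate projection $\rho\colon Q \to G_{3}(\alpha)$ is nontrivial. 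The image $\rho(Q)$ is a closed nontrivial subgroup of the uniform group $G_{3}(\alpha)$, and it is a quotient of $G_{3}(\beta)$.

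The remaining task is to compare $\beta$ with $\alpha$ through $R = \rho(Q)$. Subgroups of torsion-free $p$-adic analytic groups are again torsion-free, so $R$ is torsion-free, and an open subgroup $R^{p^{k}}$ of it is uniform. The subgroup $G_{3}(\beta)^{p^{k}}$ maps onto an open subgroup of $R$, and after adjusting $k$ I can arrange a surjection from $G_{3}(\beta)^{p^{r}}$ onto a uniform group $R'$ that is open in $R$. By Corollary~\ref{cor:Homs} (with $\beta$ in place of $\alpha$), $R'$ is trivial, isomorphic to $\padicZ$, or isomorphic to $G_{3}(\beta)^{p^{r}}$; the first is excluded since $R$ is nontrivial and torsion-free. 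If $R' \cong G_{3}(\beta)^{p^{r}}$, then $R$ has dimension~$3$, so $R$ is open in $G_{3}(\alpha)$. The $\padics$\nbd Lie algebras then agree:
\[
\padics \otimes L_{3}(\beta) \cong \padics \otimes L_{3}(\alpha),
\]
and since $\alpha$ is an invariant of this Lie algebra, $\alpha = \beta$.

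The main obstacle is the case $R' \cong \padicZ$, that is, every nontrivial coordinate projection has $1$\nbd dimensional image. To handle it, I would run the argument on the kernel $Q_{1} = Q \cap \ker\pi_{n}$, which is closed and normal in $Q$, or use all coordinates at once. The kernel of $Q \to \prod_{\text{coords}} G_{3}(\alpha)$ at level $n$ is $Q \cap \ker \pi_{n}$. If every projection has abelian image, then $[Q,Q] \leq \ker\pi_{n}$. Since $[Q,Q]$ is open in the normal subgroup $\overline{\langle z_{2},z_{3}\rangle} \cong \padicZ^{2}$, I would then restart the descent with $[Q,Q]$ at the next level where it is nontrivial. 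The same analysis via Lemma~\ref{lem:LieIdeal}, applied to the image of $Q$ acting on it, should again produce either an open copy of a $3$\nbd dimensional group or a contradiction. The delicate point is controlling the nonabelian structure across levels: a $2$\nbd dimensional abelian image must come with an action of $y$ whose eigenvalues lie outside~$\padics$. Since $\beta \in \mathcal{A}$, such an action cannot embed into abelian images of $\padicZ$\nbd rank~$1$, so some coordinate must eventually see a $3$\nbd dimensional image.
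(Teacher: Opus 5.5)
Your reduction to $Q \le P$ is fine. So is the case you treat in detail: if, at the first level $n$ with $Q\pi_{n} \neq 1$, some coordinate projection $\rho \colon Q \to G_{3}(\alpha)$ has non-abelian image, then Corollary~\ref{cor:Homs} gives a $3$-dimensional, hence open, image and $\alpha = \beta$ follows. (The detour through $R^{p^{k}}$ is unnecessary, since $R$ is already uniform as a torsion-free quotient of a uniform group.)

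The genuine gap is the case you flag yourself: $Q\pi_{n}$ abelian, i.e.\ $Q\pi_{n} \cong \mathbb{Z}_{p}$. Nothing rules this out a priori, and your suggested remedies do not close it. In that case $Q \cap \ker\pi_{n}$ and $\overline{[Q,Q]}$ both lie in the copy of $\overline{\langle z_{2},z_{3}\rangle} \cong \mathbb{Z}_{p}^{2}$ inside $Q$. So they are abelian, all their coordinate images are abelian, and Corollary~\ref{cor:Homs} does not apply to them because they are not of the form $G_{3}(\beta)^{p^{r}}$. Using all coordinates at once just gives back $Q\pi_{n}$, which is abelian. At levels beyond $n$, $Q$ itself in general no longer lies in the base group: it permutes the coordinates through its nontrivial image in the top group. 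A coordinate projection of $Q$ is then not a homomorphism, so there is nothing to which Lemma~\ref{lem:LieIdeal} or Corollary~\ref{cor:Homs} can be applied. Your closing remarks about eigenvalues of $y$ lying outside $\mathbb{Q}_{p}$ are a heuristic, not an argument.

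The missing idea is to take $n$ minimal with $Q\pi_{n}$ \emph{non-abelian} (so $Q\pi_{n-1}$ is abelian but possibly nontrivial). You then pass to a finite-index subgroup of $Q$ to recover a single coordinate homomorphism, as follows.
\begin{enumerate}
\item The $P_{n-1}$-orbits $\Gamma_{i}$ on $X_{2n-1}$ are finite, and each $G_{3}(\alpha)^{\{\omega\}\times\Gamma_{i}}$ is normal in $P_{n}$. The resulting maps $P_{n} \to G_{3}(\alpha) \wr_{\Gamma_{i}} P_{n-1}$ jointly embed $P_{n}$, so some $\psi \colon Q \to G_{3}(\alpha) \wr_{\Gamma} P_{n-1}$ has non-abelian, uniform image.
\item The kernel $C$ of the action of $P_{n-1}$ on $\Gamma$ has finite $p$-power index, so $\psi(Q^{p^{r}}) \le G_{3}(\alpha)^{\Gamma} \times C$ for some $r$.
\item This image is still non-abelian, because for a uniform group $U$, if $U^{p^{r}}$ is abelian then so is $U$. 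Its image in $C$ is abelian by minimality of $n$. Hence some coordinate map $\theta \colon Q^{p^{r}} \to G_{3}(\alpha)$ has non-abelian image.
\item Corollary~\ref{cor:Homs}, applied to $Q^{p^{r}} \cong G_{3}(\beta)^{p^{r}}$, shows that $\theta$ is injective with $3$-dimensional, hence open, image. Commensurability then gives $\alpha = \beta$.
\end{enumerate}
This is precisely why Corollary~\ref{cor:Homs} is formulated for $G_{3}(\alpha)^{p^{r}}$ rather than only for $r = 0$.
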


\begin{proof}
  Since $P$~is a Sylow pro\nbd$p$ subgroup of~$W$, the hypothesis
  ensures that $P$~has a closed subgroup~$K$ that is isomorphic
  to~$G_{3}(\beta)$.  In particular, $K$~is non-abelian, so there
  exists some minimal~$n$ such that $K\pi_{n}$~is a non-abelian closed
  subgroup of~$P_{n}$.

  Let $\set{ \Gamma_{i} }{ i \in I }$ be the orbits of~$P_{n-1}$
  on~$X_{2n-1}$.  Thus the base group of~$P_{n}$ is the Cartesian
  product of factors $G_{3}(\alpha)^{\{\omega\} \times \Gamma_{i}}$
  for $\omega \in \Omega$ and $i \in I$, each of which is a normal
  subgroup of~$P_{n}$.  There is therefore, for each $\omega \in
  \Omega$ and $i \in I$, a natural homomorphism $\psi_{\omega,i}
  \colon P_{n} \to G_{3}(\alpha) \wr_{\Gamma_{i}} P_{n-1}$
  corresponding to projecting the base group of~$P_{n}$ onto
  $G_{3}(\alpha)^{\{\omega\} \times \Gamma_{i}}$.  Since the
  intersection of all the kernels of the maps~$\psi_{\omega,i}$ is
  trivial, there is an associated embedding of~$P_{n}$ in the
  Cartesian product
  \[
  \prod_{(\omega,i) \in \Omega \times I} (G_{3}(\alpha)
  \wr_{\Gamma_{i}} P_{n-1}).
  \]
  As $K\pi_{n}$~is non-abelian, it follows that there exists some
  $\omega \in \Omega$ and $i \in I$ such that
  $K\pi_{n}\psi_{\omega,i}$~is non-abelian.  Fix these indices and
  write $\Gamma = \Gamma_{i}$ and $\psi = \pi_{n}\psi_{\omega,i}$ to
  simplify notation.  Thus $\psi \colon K \to G_{3}(\alpha)
  \wr_{\Gamma} P_{n-1}$ is a homomorphism with non-abelian image.
  Note that $K\psi$~is a uniform pro\nbd$p$ group since it is an image
  of~$K$ in a torsion-free group.

  Let $C$~be the kernel of the action of~$P_{n-1}$ on~$\Gamma$.  Since
  $\Gamma$~is finite, $C$~has finite index in~$P_{n-1}$.  Hence there
  exists~$r$ such that
  \begin{equation}
    (K^{p^{r}})\psi = (K\psi)^{p^{r}} \leq G_{3}(\alpha) \wr_{\Gamma}
    C \cong G_{3}(\alpha)^{\Gamma} \times C.
    \label{eq:J-power}
  \end{equation}
  Note $(K^{p^{r}})\psi$~is non-abelian since $K\pi_{n}$~is uniform
  and non-abelian.  Furthermore, $(K^{p^{r}})\psi$~is the subdirect
  product of its images in each factor~$G_{3}(\alpha)$ and in~$C$
  arising from the above direct product.  The image in~$C$ is abelian
  since the image of~$K$ in~$P_{n-1}$ is, by choice of~$n$, abelian.
  Hence the image of~$(K^{p^{r}})\psi$ in one of the
  factors~$G_{3}(\alpha)$ in Equation~\eqref{eq:J-power} must be
  non-abelian.  Thus there is a homomorphism $\theta \colon K^{p^{r}}
  \to G_{3}(\alpha)$ with non-abelian image that is determined by
  projecting~$(K^{p^{r}})\psi$ to this factor.

  We now apply Corollary~\ref{cor:Homs} to conclude that $\theta$~is
  an isomorphism from~$K^{p^{r}}$ to its image in~$G_{3}(\alpha)$.  In
  particular, $(K^{p^{r}})\theta$~has rank~$3$ and hence is open
  in~$G_{3}(\alpha)$.  We conclude that there is an open subgroup
  of~$G_{3}(\beta)$ that is isomorphic to an open subgroup
  of~$G_{3}(\alpha)$; that is, $G_{3}(\alpha)$~and~$G_{3}(\beta)$ are
  commensurable.  Hence $\alpha = \beta$ by the proof of
  \cite[Theorem~1.1]{snopce}.
\end{proof}

\begin{cor}
\label{cor:W-nonCommensurable}
  The wreath products~$W(\alpha)$ for $\alpha \in \mathcal{A}$ are
  pairwise non-commensurable.
\end{cor}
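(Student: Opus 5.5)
Suppose $\alpha,\beta \in \mathcal{A}$ and that $W(\alpha)$ and $W(\beta)$ are commensurable. We will show $\alpha = \beta$ by exhibiting a closed copy of $G_{3}(\beta)$ inside $W(\alpha)$ and then invoking Lemma~\ref{lem:Main}.

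Commensurability gives open subgroups $O_{\alpha} \leq W(\alpha)$ and $O_{\beta} \leq W(\beta)$ and a topological isomorphism $\phi \colon O_{\beta} \to O_{\alpha}$. (If commensurability is meant abstractly, we use that an abstract isomorphism between open subgroups of profinite groups is taken to be topological, as in the definition recalled in Section~\ref{sec:topological_groups}.) The aim is to find a closed subgroup of $O_{\beta}$ isomorphic to $G_{3}(\beta)$ or to some open subgroup of it.

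To find such a subgroup, note that $V(\beta) = G_{3}(\beta) \wr_{\Omega} F$ sits as a closed subgroup of $J_{2}$; for instance, it is a factor of the base group $V(\beta)^{X_{1}}$. The natural splitting $J_{2n+2} \to J_{2n+1}$ gives closed copies of factors of the base groups at every even level inside $W(\beta)$. Since $O_{\beta}$ is open, it contains the kernel of the projection $W(\beta) \to J_{m}$ for some $m$. That kernel contains a full coordinate copy of $G_{3}(\beta)$: take one coordinate of the base group $G_{3}(\beta)^{\Omega \times X_{2k+1}}$ at some even level $2k+2 > m$, embedded via the inverse limit as elements acting trivially above and below that level. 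It is cleanest to realise this as the set of tree automorphisms supported on one vertex's children. Thus $O_{\beta}$ contains a closed subgroup $K_{0} \cong G_{3}(\beta)$, and $K_{0}\phi$ is then a closed subgroup of $W(\alpha)$ isomorphic to $G_{3}(\beta)$.

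Lemma~\ref{lem:Main} now gives $\alpha = \beta$. The only real work is making sure the embedded copy of $G_{3}(\beta)$ is closed and lies in the open subgroup. Both follow from the identification of $W$ with a closed subgroup of $\Aut{\Tree}$ in the permutation topology: elements supported deep in the tree lie in every neighbourhood of the form $W_{(\Delta)}$ for $\Delta$ a finite set of vertices at bounded level. Even without that, openness gives a finite-index intersection, and one can replace $G_{3}(\beta)$ by its open subgroup $G_{3}(\beta)^{p^{r}}$. This subgroup is again uniform with Lie ring $p^{r+1}L_{3}(\beta)$, and the proof of Lemma~\ref{lem:Main} applies verbatim, since it only uses commensurability with $G_{3}(\beta)$ at the end.
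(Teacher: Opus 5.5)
Your proposal is correct and is essentially the paper's argument. An open subgroup of $W(\beta)$ contains the kernel of $W(\beta)\to J_m$, and this kernel contains a closed copy of $G_3(\beta)$ obtained via the canonical splittings; transporting that copy into $W(\alpha)$ lets Lemma~\ref{lem:Main} force $\alpha=\beta$.

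One small wording point: in the tree picture such an element is not trivial ``below'' its level, since it carries along the whole subtrees under the children it permutes. The splitting-section description you actually use is the right one, and your fallback via $G_3(\beta)^{p^r}$ is valid though not needed.
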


\begin{proof}
  Let $\alpha,\beta \in \mathcal{A}$ and suppose that
  $W(\alpha)$~and~$W(\beta)$ are commensurable.  Then there exist open
  subgroups $U$~and~$V$ of $W(\alpha)$~and~$W(\beta)$, respectively,
  that are isomorphic.  Then there exists~$n$ such that $V$~contains
  the kernel of the map $W(\beta) \to W_{n}(\beta)$.  Therefore
  $V$~contains (many!) closed subgroups isomorphic to~$G_{3}(\beta)$.  Hence
  $U$, and therefore also~$W(\alpha)$, also has a closed subgroup isomorphic
  to~$G_{3}(\beta)$.  Lemma~\ref{lem:Main} then tells us that $\alpha
  = \beta$.
\end{proof}

\section{Proof of Theorem~\ref{thm:uncount_simple}}
\label{sec:ProofUncountableSimple}

We now describe the choices involved in constructing uncountably many pairwise not locally isomorphic groups in $\mathscr{S}$.

Let $\mathcal{A}$~be the subset of~$\padicZ$ defined in Lemma~\ref{lem:alpha_uncount}.
Fix $\alpha \in \mathcal{A}$.  Take a prime number $p \geq 7$ and choose a subgroup~$H$ of~$\GL[2]{\padics}$, isomorphic to the additive group~$\padicZ$ of $p$\nbd adic integers, such that the semidirect product $\padicZ^{2} \rtimes H$, constructed via the natural action of~$H$, is isomorphic to the uniform pro\nbd$p$ group~$G_{3}(\alpha)$ defined in Subsection~\ref{sub:Snopce}.  The observations made in Section~\ref{sec:linear} then apply with this choice of compact subgroup~$H$.

Let $r \geq 3$ and fix a permutation group~$F$ on $\Omega = \{1,2,\dots,r\}$ such that the subgroup~$F^{+}$ of~$F$ generated by point stabilizers is transitive and form the semidirect product $E = E(\alpha) = \padics^{2} \times (H \times \langle a \rangle)$ where $a$~acts on~$\padics^{2}$ by multiplying by~$p$.  We take $L = G_{3}(\alpha)$ and let $N = N(\alpha) = \ker\pi \rtimes F$, where $\pi$~is the map given in Equation~\eqref{eq:pi-map}.

Finally we take $M$~to be the alternating group~$A_{6}$ in its natural action on the set $X=\{1,\ldots,6\}$.  Applying Proposition~\ref{prop:ApplicationOfBoxProduct}, we establish that the universal group $S(\alpha) = \mathcal{U}(A_{6}, N(\alpha))$ is a closed permutation group on the $(6,\aleph_0)$-biregular tree $\Tree$ and it is a non-discrete, compactly generated and abstractly simple tdlc group. Furthermore, recalling that we denote by $\Omega_X$ the set of vertices of $\Tree$ with valency~$6$, in Proposition~\ref{prop:ApplicationOfBoxProduct} part~\ref{i:BoxAppStabilizers} we saw that the point stabilizer of a vertex $w\in \Omega_X$ in $S(\alpha)$ is of the form
\[
W(\alpha) = \dots \wr V(\alpha) \wr A_{5} \wr V(\alpha) \wr A_{5} \wr V(\alpha) \wr A_{6}.
\]
Our choice of~$p$ ensures that both $A_{5}$~and~$A_{6}$ are $p'$\nbd groups and hence this iterated wreath product~$W(\alpha)$ has the form considered in Subsection~\ref{sub:IteratedWreathLocal}.  Corollary~\ref{cor:W-nonCommensurable} then tells us that if $\alpha,\beta \in \mathcal{A}$ are distinct then the open subgroup~$W(\alpha)$ of~$S(\alpha)$ is not commensurable to the open subgroup~$W(\beta)$ of~$S(\beta)$.  Hence $S(\alpha)$~is not locally isomorphic to~$S(\beta)$ whenever $\alpha,\beta \in \mathcal{A}$ are distinct.  This establishes that $\set{S(\alpha)}{\alpha \in \mathcal{A}}$ is our claimed collection of $2^{\aleph_{0}}$~many non-discrete compactly generated simple tdlc groups which are pairwise not locally isomorphic.


\printbibliography
\end{document}